\documentclass[12pt, a4paper]{article}

\usepackage[utf8]{inputenc} 
\usepackage[T1]{fontenc}    
\usepackage{subcaption} 

\usepackage[a4paper, top=3cm, bottom=3cm, left=3cm, right=3cm, marginparwidth=1.75cm]{geometry}

\usepackage{listings}   
\usepackage{xcolor}     
\usepackage{amsmath}    
\usepackage{amssymb}    
\usepackage{amsthm}     
\usepackage{graphicx}   
\usepackage{hyperref}   
\hypersetup{colorlinks=true, linkcolor=blue, urlcolor=blue, citecolor=red}

\definecolor{codegreen}{rgb}{0,0.6,0}
\definecolor{codegray}{rgb}{0.5,0.5,0.5}
\definecolor{codepurple}{rgb}{0.58,0,0.82}
\definecolor{backcolour}{rgb}{0.95,0.95,0.95}

\lstdefinestyle{pythonstyle}{
    backgroundcolor=\color{backcolour},   
    commentstyle=\color{codegreen},
    keywordstyle=\color{magenta},
    numberstyle=\tiny\color{codegray},
    stringstyle=\color{codepurple},
    basicstyle=\footnotesize\ttfamily, 
    breakatwhitespace=false,         
    breaklines=true,                 
    captionpos=b,                    
    keepspaces=true,                 
    numbers=left,                    
    numbersep=5pt,                  
    showspaces=false,                
    showstringspaces=false,
    showtabs=false,                  
    tabsize=2
}

\theoremstyle{plain} 
\newtheorem{theorem}{Theorem}[section] 
\newtheorem{lemma}[theorem]{Lemma}       

\newtheorem{conjecture}[theorem]{Conjecture}

\theoremstyle{definition} 
\newtheorem{definition}[theorem]{Definition}

\theoremstyle{remark} 
\newtheorem{remark}[theorem]{Remark}

\DeclareMathOperator{\re}{Re}

\begin{document}

\title{A Complex Analogue of Spencer's Six Standard Deviations Theorem and the Complex Banach--Mazur Distance}

\author{Tomasz Kobos\\
Faculty of Mathematics and Computer Science\\
Jagiellonian University in Cracow\\
\small Lojasiewicza 6, 30-348 Krakow, Poland\\
\small\texttt{tomasz.kobos@uj.edu.pl}\\
\and 
Marin Varivoda\\
Faculty of Science - Department of Mathematics\\ 
University of Zagreb\\ 
\small Bijenička cesta 30, 10000 Zagreb, Croatia\\
\small {\texttt{mvarivod.math@pmf.hr}}
}


\maketitle

\begin{abstract}

We investigate a complex analogue of Spencer's Six Standard Deviations Theorem. Specifically, we propose the following conjecture: for any dimension $n \geq 2$, given vectors $a_1, \ldots, a_n \in \mathbb{C}^n$ satisfying $\|a_i\|_{\infty} \leq 1$ for each $i=1, \ldots, n$, there exists a vector $x \in \mathbb{C}^n$ with all coordinates of modulus one such that $|\langle x, a_i \rangle| \leq \sqrt{n}$ for every $i=1, \ldots, n$. The bound of $\sqrt{n}$ is sharp, as demonstrated by the row vectors of any complex $n \times n$ Hadamard matrix. Furthermore, if the conjecture holds in dimension $n$, it implies that the Banach--Mazur distance between the complex $\ell_1^n$ and $\ell_{\infty}^n$ spaces is equal to $\sqrt{n}$. We prove the conjecture for $n =2, 3$, thereby establishing also that $d_{BM}(\ell_1^n, \ell_{\infty}^n) = \sqrt{n}$ for these dimensions. Additionally, we propose a conjecture about the Banach--Mazur distances between complex $\ell_p^n$ spaces and we verify it for $n=2$. This leads to a complete determination of all possible Banach--Mazur distances between complex $\ell_p^2$ spaces.
\end{abstract}

\section{Introduction}

For an integer $n \geq 1$ and the field $\mathbb{K}=\mathbb{R}$ or $\mathbb{K}=\mathbb{C}$, we denote by $\ell_p^n(\mathbb{K})$ the space $\mathbb{K}^n$ equipped with the norm $\| \cdot \|_p$ defined for $x \in \mathbb{K}^n$ as
$$\|x\|_p = \left ( |x_1|^p + \ldots + |x_n|^p \right )^{\frac{1}{p}}.$$
In particular, $\| \cdot \|_2$ and $\| \cdot \|_{\infty}$ denote the Euclidean and the maximum norms, respectively. The unit ball of $\ell_p^n(\mathbb{K})$ is denoted by $\mathcal{B}_p^n(\mathbb{K})$. When the context will be clear, the field $\mathbb{K}$ will be omitted from the notation. By $\langle \cdot, \cdot \rangle$ we will understand the standard inner product in $\mathbb{K}^n$ that is defined for for $x, y \in \mathbb{K}^n$ as
$$\langle x, y \rangle = x_1 \overline{y_1} + \ldots + x_n\overline{y_n}.$$
In particular, we have $\langle x, x \rangle = \|x\|^2_2$ for $x \in \mathbb{K}^n$. For normed spaces $X, Y$ and a linear operator $A: X \to Y$, we denote by $\|A\|_{X \to Y}$ the usual operator norm. In the case of $X = \ell_p^n(\mathbb{K})$ and $Y=\ell_q^n(\mathbb{K})$ we shall write shortly $\|A\|_{p \to q}$ for the norm of $A$.  By a \emph{Hadamard matrix}, we mean an $n \times n$ matrix with the entries in $\mathbb{K}$ of modulus $1$ and pairwise orthogonal column vectors. For a real (or complex) matrix $A$, by $A^*$ we denote the transpose (or Hermitian conjugate) of $A$.

The famous Spencer's Six Standard Deviations Theorem from $1985$ \cite{spencer} is a cornerstone of combinatorial discrepancy theory, addressing the problem of balancing assignments in set systems and vector spaces. For any collection of $n$ subsets $A_1, \ldots, A_n$ of the set $S=\{1, \ldots, n\}$, the theorem guarantees the existence of a two-coloring of the elements of $S$ such that the discrepancy of each set $A_i$ -- the absolute value of the difference between the elements of each color —- does not exceed $6 \sqrt{n}$. While highly celebrated in combinatorics, the original result of Spencer was of a more geometric flavor, belonging to a broad family of vector balancing problems in $\mathbb{R}^n$. The original result goes as follows.

\begin{theorem}[Spencer's Six Standard Deviations Theorem]
\label{thmspencer}
Let $n \geq 1$ be an integer and let $a_1, \ldots, a_n \in \mathbb{R}^n$ be vectors such that $\|a_i\|_{\infty} \leq 1$ for every $i=1, \ldots, n$. Then, there exists a sign vector $x \in \{-1, 1\}^n$ such that $|\langle x, a_i \rangle| \leq 6 \sqrt{n}$ for every $i=1, \ldots, n$.
\end{theorem}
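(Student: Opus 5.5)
I will follow the partial coloring method combined with an entropy (pigeonhole) argument, which is Spencer's original route. The plan is to build the sign vector $x$ in $O(\log n)$ rounds. In each round a constant fraction of the still uncolored coordinates receives values in $\{-1,1\}$, while the contribution to each $\langle x, a_i\rangle$ stays small. The rounds are arranged so that the errors form a convergent geometric-type series whose sum is $O(\sqrt{n})$. Tracking constants carefully then gives the value $6$.

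\textbf{Partial coloring lemma.} Let $m \le n$ and let $a_1,\ldots,a_n \in \mathbb{R}^m$ satisfy $\|a_i\|_\infty\le 1$. I claim there is $y\in\{-1,0,1\}^m$ with at least $m/2$ (say $\ge m/10$) nonzero entries such that
$$|\langle y,a_i\rangle|\le C\sqrt{m\log(2n/m)} \quad \text{for all } i.$$
To prove it, fix $\lambda = C'\sqrt{m\log(2n/m)}$. For each $\varepsilon\in\{-1,1\}^m$, form the rounded vector
$$b(\varepsilon)=\bigl(\operatorname{round}(\langle \varepsilon,a_i\rangle/2\lambda)\bigr)_{i}.$$
By Hoeffding's inequality, $\Pr(|\langle\varepsilon,a_i\rangle|> 2\lambda(k+1/2))\le 2e^{-c k^2\lambda^2/m}$. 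From this I bound the entropy of each coordinate $b_i$ by a quantity $h(\lambda/\sqrt m)$. Subadditivity gives $H(b)\le \sum_i H(b_i)\le n\, h(\lambda/\sqrt{m})$, and the choice of $\lambda$ makes this at most $m/5$. Hence some value of $b$ is attained by at least $2^{m-m/5}$ sign vectors. By Kleitman's diameter theorem, a subset of the cube of this size contains two vectors $\varepsilon,\varepsilon'$ differing in at least $m/10$ coordinates (the diameter bound). Set $y=(\varepsilon-\varepsilon')/2$. Since $b(\varepsilon)=b(\varepsilon')$, we get $|\langle y,a_i\rangle|\le 2\lambda$.

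\textbf{Iteration.} Start with $m_0=n$ uncolored coordinates. At step $j$, apply the lemma to the $a_i$ restricted to the uncolored set, of size $m_j\le (9/10)^j n$. Fix the nonzero coordinates of $y$ and recurse on the rest; once $m_j$ is constant, color the remainder arbitrarily. The total discrepancy is at most
$$\sum_j C\sqrt{m_j\log(2n/m_j)} \le C\sqrt n\sum_j (0.9)^{j/2}\sqrt{j+1} = O(\sqrt n),$$
because the $\sqrt{\log}$ factor grows only like $\sqrt{j}$ while $\sqrt{m_j}$ decays geometrically.

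\textbf{Main obstacle.} The hardest step is the entropy computation in the partial coloring lemma. I have to show that the choice $\lambda\approx\sqrt{m\log(n/m)}$ really keeps $\sum_i H(b_i)$ below a fixed fraction of $m$, uniformly over all ratios $n/m$. I would handle this through the explicit estimate $H(b_i)\lesssim e^{-c\lambda^2/m}\,\lambda^2/m$ for large $\lambda/\sqrt m$ (and $O(\log(\sqrt m/\lambda))$ otherwise), then choose $C'$ to be absolute. Reaching the specific constant $6$ requires optimizing the rounding widths and the fractions in Kleitman's bound; I would accept a weaker absolute constant first and then tune these parameters.
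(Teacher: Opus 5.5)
The paper does not prove this statement; it quotes it from Spencer \cite{spencer} as background, so there is no in-paper argument to compare against. Your outline is the standard partial-coloring route: pigeonholing over rounded values of $\langle\varepsilon,a_i\rangle$, then Kleitman's diameter theorem, then a geometric iteration. As a sketch it does yield $|\langle x,a_i\rangle|\le C\sqrt n$ for some absolute $C$:
\begin{itemize}
\item with $\lambda=C'\sqrt{m\log(2n/m)}$ and $C'$ large, $n\,h(\lambda/\sqrt m)\le m/5$ holds uniformly in $2n/m\ge 2$;
\item a class of at least $2^{4m/5}$ sign vectors cannot have diameter below $m/10$, since $\sum_{k\le m/20}\binom mk\le 2^{0.29m}$;
\item the series over rounds converges.
\end{itemize}
Two small slips need fixing. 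First, your displayed tail bound $2e^{-ck^2\lambda^2/m}$ is vacuous at $k=0$. Yet $\Pr(b_i\neq 0)=\Pr(|\langle\varepsilon,a_i\rangle|\ge\lambda)$ is exactly the term that controls $H(b_i)$. Hoeffding gives $2e^{-(2k+1)^2\lambda^2/(2m)}$, and that is what your later entropy estimate actually uses. Second, $b(\varepsilon)=b(\varepsilon')$ gives $|\langle\varepsilon-\varepsilon',a_i\rangle|<2\lambda$, hence $|\langle y,a_i\rangle|<\lambda$, not merely $2\lambda$.

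The genuine gap is the constant $6$, which is the actual content of the statement. You prove $C\sqrt n$ with $C$ unspecified and defer the rest to ``tuning.'' With the parameters you chose, the constant is nowhere near $6$:
\begin{itemize}
\item your iteration factor $\sum_{j\ge 0}(0.9)^{j/2}\sqrt{j+1}$ is about $77$;
\item it multiplies the partial-coloring constant $C$;
\item $C$ is itself not small, because $C'$ must be large enough to absorb the unspecified Hoeffding constant $c$.
\end{itemize}
Bringing this down to $6$ (Spencer's own computation gives about $5.32$) is not a cosmetic adjustment at the end. It requires redoing the partial-coloring lemma with explicit sharp quantities:
\begin{itemize}
\item the exact sub-Gaussian tail $2e^{-t^2/(2m)}$;
\item an explicit evaluation of the entropy of the rounded variable as a function of $\lambda/\sqrt m$;
\item an explicit trade-off between the entropy budget $\delta m$ and Kleitman's bound. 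A class of size $2^{(1-\delta)m}$ has diameter roughly $2\alpha m$, where $\alpha\le 1/2$ solves $-\alpha\log_2\alpha-(1-\alpha)\log_2(1-\alpha)=1-\delta$. This trade-off must be set so that the first few rounds, which carry most of the sum, color a large fraction of the coordinates at a small multiple of $\sqrt n$.
\end{itemize}
After that comes a careful numerical summation over the rounds. None of this is carried out. What your proposal establishes is the $O(\sqrt n)$ form of Spencer's theorem, not the statement with the constant $6$.
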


The optimal constant on the right-hand side of the inequality is unknown, but Spencer's original proof actually gives $5.32$. This was recently improved by Pesenti and Vladu \cite{pesenti} to $3.675$. On the other hand, for every dimension $n$ for which an $n \times n$ Hadamard matrix exists, the constant cannot be less than $1$. Indeed, if $a_1, \ldots, a_n \in \mathbb{R}^n$ are column vectors of a Hadamard matrix, then it follows from the Parseval identity that any sign vector $x \in \{-1, 1\}^n$ satisfies
\begin{equation}
\label{parseval}
|\langle x, a_1 \rangle| ^2 + \ldots + |\langle x, a_n\rangle|^2 = n \|x\|_2^2 = n^2,
\end{equation}
which shows that for at least one index $1 \leq i \leq n$ we must have $|\langle x, a_i \rangle| \geq \sqrt{n}$. Thus, the best possible constant in the estimate, that works for all sufficiently large $n$, cannot be less than $1$.

In 1989, Gluskin \cite{gluskin} independently obtained a similar upper bound, approaching Theorem \ref{thmspencer} from the perspective of convex geometry and functional analysis (see also the papers of Banaszczyk \cite{banaszczyk} and Giannopoulos \cite{giann} for related approaches). The main goal of this paper is to explore a complex analogue of Theorem \ref{thmspencer} and its connection with the geometry of complex Banach spaces, particularly the Banach--Mazur distance. In our exploration, the question of the optimal constant actually plays a central role.

A natural extension of Theorem \ref{thmspencer} to the complex setting involves considering complex vectors $a_1, \ldots, a_n \in \mathbb{C}^n$ satisfying $\|a_i\|_{\infty} \leq 1$ for the maximum norm $\| \cdot \|_{\infty}$ in $\mathbb{C}^n$. It was noted by Hajela in $1992$ \cite{hajela} that the same asymptotic bound of $C \sqrt{n}$ can be obtained for complex vectors, even when only relying on real vectors $x \in \{-1, 1\}^n$. This can be easily derived from Theorem \ref{thmspencer} by embedding $\mathbb{C}^n$ into $\mathbb{R}^{2n}$ in an obvious way and using the inequalities $\|\widetilde{x}\|_{\infty} \leq \|x\|_{\infty} \leq \sqrt{2} \|\widetilde{x}\|_{\infty}$ for a vector $x \in \mathbb{C}^n$ and its embedding $\widetilde{x} \in \mathbb{R}^{2n}$. However, the optimal value of the constant $C$ is even less clear in this case. On the other hand, we believe that in the complex version of Spencer's theorem we shall consider, the question of the best constant might have a clear answer in every dimension. Namely, it is natural to consider not only complex vectors $a_i$ but also allow a vector $x$ to have complex coordinates with modulus equal to $1$. A complex number with the modulus $1$ will be called \emph{unimodular} and, similarly, a vector $x \in \mathbb{C}^n$ with unimodular coordinates will be called a \emph{unimodular vector}. Such vectors are sometimes also called \emph{phase vectors} and the term unimodular should not be confused with other meanings of this word used in different areas of mathematics. Our main conjecture goes as follows.

\begin{conjecture}
\label{conj1}
Let $n \geq 1$ be an integer and let $a_1, \ldots, a_n \in \mathbb{C}^n$ be vectors such that $\|a_i\|_{\infty} \leq 1$ for every $i=1, \ldots, n$. Then, there exists a unimodular vector $x \in \mathbb{C}^n$ such that $|\langle x, a_i \rangle| \leq \sqrt{n}$ for every $i=1, \ldots, n$.
\end{conjecture}

As explained previously, the key aspect of the conjecture is the optimal bound in every dimension $n$, rather than just an asymptotic estimate. Clearly, in every dimension $n$ the right-hand side cannot be less than $\sqrt{n}$, as we can repeat the computation in \eqref{parseval} with respect to vectors $a_1, \ldots, a_n$, being the row vectors of an $n \times n$ complex Hadamard matrix, which, in contrast to the real case, exists in every dimension. Thus, the conjecture is tight, with every complex Hadamard matrix attaining equality in this max-min problem. Notably, in dimension three, there are also other equality cases -- see Section \ref{sec3d} for details. It turns out that for $n=3$, the vectors $a_1, a_2, a_3$ for which equality is attained, need not to be unimodular.

Conjecture \ref{conj1} resembles certain famous open problems in the geometry of complex vectors, such as the conjecture on the existence of $n^2$ equiangular lines in $\mathbb{C}^n$, i.e. Zauner's conjecture about existence of SIC-POVMs (see for example \cite{sic-povm}) or the classification of complex Hadamard matrices in every dimension $n$ (see for example \cite{tadej}), the latter being directly connected to our conjecture (due to the equality case). These problems share a sharp contrast with the real case, where the answers for real scalars are far less clear. This difference seems to stem from the fact that there are only two real scalars of modulus $1$, while in the complex case we have a much richer set -- i.e. the complex unit circle. Another well-known problem in complex geometry with a remarkably similar flavor to Conjecture \ref{conj1} is the complex plank problem, solved by Ball in $2001$ \cite{ball}, with a simplified proof recently published by Ortega \cite{ortega}. The complex plank problem is a version of Tarski's plank problem in the space $\mathbb{C}^n$ endowed with the Euclidean norm. Ball's result states that for any vectors $a_1, \ldots, a_N \in \mathbb{C}^n$ with Euclidean norm $1$ and for any non-negative real numbers $t_1, \ldots, t_N$ with $\sum_{i=1}^{N} t_i^2 = 1$, there exists a vector $x \in \mathbb{C}^n$ such that $\|x\|_2=1$ and $|\langle x, a_i \rangle | \geq t_i $ for every $1 \leq i \leq N$. Thus, while Conjecture \ref{conj1} seeks simultaneous minimization of certain inner products, the complex plank problem addresses their simultaneous maximization. Moreover, both results are clearly contrasted with respect to the norm being used -- in the complex plank problem the Euclidean norm plays the central role, while in the Conjecture \ref{conj1} it is the maximum norm. Beyond a similar outlook, we are not aware of any direct connection between these two problems.

Another motivation for Conjecture \ref{conj1}, a main focus of our investigation, is its connection to the geometry of complex Banach spaces. Recall that for two normed spaces $X, Y$ of dimension $n$ over the same field $\mathbb{K}$, the \emph{Banach--Mazur distance} $d_{BM}(X, Y)$ of $X$ and $Y$ is defined as
$$d_{BM}(X, Y) = \inf \|A\|_{X \to Y} \cdot \|A^{-1}\|_{Y \to X},$$
where the infimum is taken over all linear invertible operators $A: X \to Y$. By compactness, the infimum is attained by some linear operator, and the Banach--Mazur distance is a multiplicative distance on the set of equivalence classes of normed spaces of dimension $n$, where isometric spaces are identified. The Banach--Mazur distance, introduced by Banach and Mazur in $1940$s, plays a significant role in functional analysis and the geometry of Banach spaces by providing a quantitative measure of how "similar" two Banach spaces are in terms of their linear structure. We refer the reader to Chapter $9$ of Tomczak-Jaegermann's monograph \cite{ntj} for its fundamental properties.  Although the concept of the Banach--Mazur distance is natural, precisely determining the distance is often extremely challenging. This difficulty is reflected by the fact that the maximal possible distance between two $n$-dimensional normed spaces over $\mathbb{K}$ is known only for $n=2$ and $\mathbb{K}=\mathbb{R}$, except for the trivial case of $n=1$. In general, the maximum distance is bounded above by $n$ and below by $cn$ for some absolute constant $c$ (in both real and complex cases). For the classical real $\ell_1^n$ and $\ell_{\infty}^n$ spaces, the distance is known only for $n \leq 4$. For $n=2$, the spaces are obviously isometric, but for $n=3$ and $n=4$ determining the distance (which is $\frac{9}{5}$ and $2$, respectively) is non-trivial and was done only recently (see \cite{kobosvarivoda}). In general, the distance $d_{BM}(\ell_1^n(\mathbb{R}), \ell_{\infty}^n(\mathbb{R}))$ is known to be asymptotically of order $\sqrt{n}$, but the exact value in general remains elusive (see \cite{xue} and Remark $3.2$ in \cite{kobosvarivoda}). As we shall see, the problem of determining the Banach--Mazur distance between $\ell_1^n$ and $\ell_{\infty}^n$, but considered now in the complex setting, is notably different and may have a clear answer in every dimension. For general $\ell_p^n$ spaces (where $1 \leq p \leq \infty$), the exact distance $d_{BM}(\ell_p^n, \ell_q^n)$ can be determined easily, when $p$ and $q$ are on the same side of $2$, i.e. $1 \leq p, q \leq 2$ or $2 \leq p, q \leq \infty$. In this case, we have a simple formula $d_{BM}(\ell_p^n, \ell_q^n)=n^{\left |\frac{1}{p}-\frac{1}{q} \right |}$. However, when $p$ and $q$ are on different sides of $2$, as in the case of $\ell_1^n$ and $\ell_{\infty}^n$, determining the exact distance is much more difficult. For such $p$, $q$ the distance is known to be of order  $n^{\alpha}$, where $\alpha = \max \left \{ \left | \frac{1}{p}-\frac{1}{2} \right |,  \left | \frac{1}{q}-\frac{1}{2} \right | \right \}$, with $n^{\alpha}$ being the exact upper bound in the complex case. Besides the previously mentioned examples of the distance $d_{BM}(\ell_1^n(\mathbb{R}), \ell_{\infty}^n(\mathbb{R}))$ for $n=2, 3, 4$, we are not aware of any other instances, where the Banach--Mazur distance $d_{BM}(\ell_p^n, \ell_q^n)$ has been determined, when $p$, $q$ are on different sides of $2$. It seems that in the complex setting, the value of the distance $d_{BM}(\ell_1^n(\mathbb{C}), \ell_{\infty}^n(\mathbb{C}))$ has not been published even for $n=2$, as these spaces are not isometric, unlike in the real case. Conjecture \ref{conj1} leads to the following conjecture concerning this distance.

\begin{conjecture}
\label{conj2}
For every integer $n \geq 1$, we have
\begin{equation}
d_{BM}(\ell_{1}^n(\mathbb{C}), \ell_{\infty}^n(\mathbb{C})) = \sqrt{n}.
\end{equation}
\end{conjecture}

This conjecture is weaker than Conjecture \ref{conj1}, i.e. Conjecture \ref{conj1} implies Conjecture \ref{conj2}. In the complex case, we have the upper bound $d_{BM}(\ell_{1}^n(\mathbb{C}), \ell_{\infty}^n(\mathbb{C})) \leq \sqrt{n}$ in every dimension $n \geq 1$, which follows easily from the existence of a complex $n \times n$ Hadamard matrix (see Lemma \ref{lem:lpdist}). Thus, the main difficulty lies in establishing the lower bound on the Banach--Mazur distance. To see how Conjecture \ref{conj1} implies this lower bound, assume that  $A: \mathbb{C}^n \to \mathbb{C}^n$ is a linear operator such that $\|A\|_{1 \to \infty}=1$. If $e_i$ denotes the $i$-th vector of the canonical unit basis, then $\|A(e_i)\|_{\infty} \leq \|e_i\|_1=1$ for each $i=1, \ldots, n$, which shows that each entry of the matrix of $A$ has modulus at most $1$, since $A(e_1), \ldots, A(e_n)$ are the column vectors of $A$. Hence, if we denote by $a_1, \ldots, a_n$ the row vectors of $A$, then $\|a_i\|_{\infty} \leq 1$ for every $i=1, \ldots, n$, which shows that the vectors $a_1, \ldots, a_n$ satisfy the assumptions of Conjecture \ref{conj1}. Now, if $x \in \mathbb{C}^n$ is a unimodular vector satisfying $|\langle x, a_i \rangle | \leq \sqrt{n}$ for $i=1, \ldots, n$, then for the conjugate vector $\overline{x} \in \mathbb{C}^n$ we have
$$n=\|\overline{x}\|_1=\|A^{-1}(A(\overline{x}))\|_1 = \left \| A^{-1}\left ( \overline{\langle x, a_1 \rangle}, \ldots \overline{\langle x, a_n \rangle} \right ) \right \|_1$$
 $$\leq \|A^{-1}\|_{\infty \to 1} \cdot \left \| \left ( \overline{\langle x, a_1 \rangle}, \ldots \overline{\langle x, a_n \rangle} \right ) \right \|_{\infty} \leq \|A^{-1}\|_{\infty \to 1} \cdot \sqrt{n},$$
which yields the desired inequality $\|A^{-1}\|_{\infty \to 1} \geq \sqrt{n}$.

We also propose a more general conjecture regarding the Banach--Mazur distance of $\ell_p^n$ spaces with $p$, $q$ on different sides of $2$.

\begin{conjecture}
\label{conj3}
For every integer $n \geq 1$ and all $1 \leq q \leq 2 \leq p \leq \infty$ we have
\begin{equation}
\label{lpupbound}
d_{BM}(\ell_{p}^n(\mathbb{C}), \ell_{q}^n(\mathbb{C})) = n^{\alpha},
\end{equation}
where $\alpha = \max \left \{ \frac{1}{2}-\frac{1}{p}, \frac{1}{q}-\frac{1}{2} \right \}$.
\end{conjecture}

As mentioned before, the upper bound is already known and can be handled easily (see Lemma \ref{lem:lpdist}). Confirming this conjecture would lead to determining alll distances between $\ell_p^n(\mathbb{C})$ and $\ell_q^n(\mathbb{C})$, for any possible $p, q$.

Although both Conjecture \ref{conj1} and Conjecture \ref{conj3} imply Conjecture \ref{conj2}, we are not aware of any straightforward way to deduce Conjecture \ref{conj1} or Conjecture \ref{conj3} from Conjecture \ref{conj2}. Moreover, there does not appear to be any immediate implication between Conjecture \ref{conj1} and Conjecture \ref{conj3}. Our contribution to these conjectures is summarized in the following result.
\begin{theorem}
Conjecture \ref{conj1} holds for $n = 2, 3$. In particular, Conjecture \ref{conj2} also holds for $n=2, 3$. Conjecture \ref{conj3} holds for $n=2$.
\end{theorem}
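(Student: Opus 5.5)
The theorem has three parts, which I would treat separately: Conjecture \ref{conj1} for $n=2$, the same for $n=3$, and Conjecture \ref{conj3} for $n=2$. Conjecture \ref{conj2} for $n=2,3$ then follows from the implication explained after Conjecture \ref{conj2}.

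\textbf{Conjecture \ref{conj1} for $n=2$.} First I normalise. Multiplying $a_i$ by a unimodular scalar, or $x$ together with the $j$-th coordinates of all $a_i$ by a unimodular scalar, does not change $|\langle x,a_i\rangle|$. So I may write $x=(1,z)$ with $|z|=1$. Then I have to find $z$ with $|a_{i1}+\bar z\, a_{i2}|\le\sqrt2$ for $i=1,2$.

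For fixed $i$, put $w=\bar z$. Then
$$|a_{i1}+w a_{i2}|^2=|a_{i1}|^2+|a_{i2}|^2+2\re(\overline{a_{i1}}a_{i2}w)\le 2+2\re(\overline{a_{i1}}a_{i2}w).$$
Hence the condition $|a_{i1}+w a_{i2}|\le\sqrt2$ holds whenever $\re(c_i w)\le 0$, where $c_i=\overline{a_{i1}}a_{i2}$. For each $i$ this set of admissible $w$ is a closed half-circle (or the whole circle if $c_i=0$). Two closed half-circles of the unit circle always intersect. This proves the case $n=2$.

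\textbf{Conjecture \ref{conj1} for $n=3$.} This is the main obstacle. The same reduction gives $x=(1,u,v)$ with $(u,v)$ on the torus $\mathbb{T}^2$. Writing $a_i=(a_{i1},a_{i2},a_{i3})$, I would use the expansion
$$|\langle x,a_i\rangle|^2\le 3+2\re\bigl(\overline{a_{i1}}a_{i2}\bar u+\overline{a_{i1}}a_{i3}\bar v+\overline{a_{i2}}a_{i3}u\bar v\bigr).$$
So it suffices to find a point of $\mathbb{T}^2$ where the three real trigonometric polynomials $f_i(u,v)$ of this form are all $\le 0$. Each $f_i$ has mean zero on the torus, so each set $\{f_i\le 0\}$ is large, but the three sets need not intersect by measure alone.

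My first attempt is an averaging argument over a well-chosen finite set, in the spirit of the Parseval identity \eqref{parseval}. Take the nine points $(u,v)=(\omega^j,\omega^k)$ with $\omega=e^{2\pi i/3}$. Also allow the symmetries $x\mapsto$ coordinatewise rotations, and choose the best translate $(u_0\omega^j,v_0\omega^k)$. The aim is to show that the sum $\sum_i \max(f_i,0)$ cannot be positive at all nine points for every translate. If that fails, I would switch to a topological or extremal argument: take a minimiser $x$ of $\max_i|\langle x,a_i\rangle|$. At a minimiser the active constraints satisfy a Lagrange/KKT condition on the torus, namely some convex combination of the gradients of the active $|\langle x,a_i\rangle|^2$ vanishes. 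Then I would show by a direct case analysis on the number of active constraints (one, two or three) that the value at such a point is at most $3$, using $\|a_i\|_\infty\le 1$. The three-active case is where the nonunimodular equality cases mentioned in Section \ref{sec3d} should appear, and I expect this computation to be the hardest part.

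\textbf{Conjecture \ref{conj3} for $n=2$.} The upper bound comes from Lemma \ref{lem:lpdist}. For the lower bound, take $A$ with $\|A\|_{q\to p}\le 1$. The case $n=2$ of the argument above produces a unimodular $x$ with $\|Ax\|_\infty\le\sqrt2$, and in fact with control of both coordinates, hence of $\|Ax\|_p$. I would then bound $\|A^{-1}\|$ from below by testing it on the vectors $Ax$, interpolating between the unimodular test vector (which is extremal for $\ell_q$) and the standard basis vectors (which are extremal for $\ell_p$). The two resulting estimates should give $2^{1/q-1/2}$ and $2^{1/2-1/p}$ respectively, and hence the maximum of the two, which is $2^{\alpha}$.
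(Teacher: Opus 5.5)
Your argument for Conjecture~\ref{conj1} with $n=2$ is correct and is the paper's semicircle argument. The case $n=3$, however, is the main content of the theorem, and you do not prove it; worse, the intermediate goal you set yourself is false. Replacing $\sum_j|a_{ij}|^2$ by $3$ throws away exactly the slack that non-unimodular rows need.

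Take the equality configuration $a_j=(1,\omega^j,0)$, $j=0,1,2$, from Section~\ref{sec3d}. Then $f_j=2\re(\omega^j\bar u)$ and $f_0+f_1+f_2\equiv 0$. A common point with $f_0,f_1,f_2\le 0$ would force $\re(\bar u)=\re(\omega\bar u)=0$, which is impossible. So no averaging over translates of the nine-point grid can succeed. The conjecture holds here only because $|\langle x,a_j\rangle|^2=2+f_j$, so $f_j\le 1$ is all that is needed.

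The KKT fallback does not close the gap either. The first-order condition you state also holds at non-minimising points. For the DFT rows, at $x=(1,1,1)$ only $|\langle x,a_1\rangle|^2=9$ is active and its gradient vanishes. So a case analysis based on that condition cannot bound the value by $3$ without second-order or global input, which the sketch does not supply.

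The missing idea is the paper's covering argument:
\begin{enumerate}
\item Reduce to centres with at most one coordinate of modulus $<1$ (Lemma~\ref{lem:blob_expansion}).
\item Use the coordinatewise rotations to place one toric body so that it contains at most one of the nine grid points $(\omega^j,\omega^k)$ (Lemma~\ref{lem:grid_placement}).
\item Classify the $4$-point subsets of the grid under the six symmetries, and show that a single open toric body can contain only a Class~1 set. Class~2 is excluded by Parseval for an orthogonal unimodular basis, and Classes~3 and~4 via Lemmas~\ref{lem:line} and~\ref{lem:trig2}.
\item Hence no body contains five grid points. Since any two Class~1 sets intersect, the other two bodies cannot cover the remaining eight points.
\end{enumerate}

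For Conjecture~\ref{conj3} with $n=2$, your route genuinely differs from the paper's. The paper proves $\|A\|_{\infty\to1}\ge2\sqrt{|\det A|}$ (Lemma~\ref{lem:2d_bm_volume_ineq}) and combines it with Hadamard's inequality and inclusions between $\ell_p$ balls. Your route can be completed, but the decisive estimate is missing and the second estimate is misattributed.

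The bound $|(Ax)_i|\le\sqrt2$ alone only gives $\|A^{-1}\|_{p\to q}\ge 2^{1/q-1/2-1/p}$. You need the sharper output of the semicircle argument, $|(Ax)_i|\le\|r_i\|_2$ for the rows $r_i$ of $A$. Then apply the triangle inequality in $\ell_{p/2}$ (valid since $p\ge2$) to the vectors $(|a_{1j}|^2,|a_{2j}|^2)$, $j=1,2$. This gives $\|Ax\|_p^2\le\|Ae_1\|_p^2+\|Ae_2\|_p^2\le2$, hence $\|A^{-1}\|_{p\to q}\ge 2^{1/q}/\sqrt2=2^{1/q-1/2}$.

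The bound $2^{1/2-1/p}$ cannot come from testing on $Ae_1,Ae_2$. For the normalised Hadamard matrix with $\frac1p+\frac1q\le1$, that test gives only $2^{1-1/q-1/p}$. Instead, it follows by applying the first bound to the dual pair, via $d_{BM}(\ell_p^2,\ell_q^2)=d_{BM}(\ell_{q^*}^2,\ell_{p^*}^2)$.

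With these repairs, this part is a clean determinant-free alternative to the paper's volume argument. It is equally two-dimensional, though: its $n=3$ analogue would need $|\langle x,a_i\rangle|\le\|a_i\|_2$ for all $i$, which fails for the example above.
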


While the two-dimensional case of Conjecture \ref{conj1} is straightforward, the two-dimensional case of Conjecture \ref{conj3} regarding the distance between $\ell_p^2$ and $\ell_q^2$ relies on a certain volume argument that unfortunately fails in higher dimensions. These results are presented in Section \ref{sec2d}, while the previous Section \ref{sechadamard} includes some preliminary observations on the complex Hadamard matrices and the Banach--Mazur distance between $\ell_p^n$ spaces. The main work of the paper is presented in Section \ref{sec3d}, where the three-dimensional case of Conjecture \ref{conj1} is proved. The argument is significantly more involved than for $n=2$, and the main idea is to formulate the problem as a covering problem of the two-dimensional torus by certain shapes called \emph{toric bodies}. To prove the impossibility of covering the torus, a shiftable grid of $9$ points, based on the third roots of unity, is used. It turns out that for any configuration of three toric bodies, one can shift the grid so that one of the $9$ points remains uncovered, thus proving that the torus cannot be covered.

\section{Preliminaries on complex Hadamard matrices}
\label{sechadamard}

In this short section, we demonstrate how to derive the upper bound \eqref{lpupbound} on the Banach--Mazur distance between the complex $\ell_p^n$ and $\ell_q^n$ spaces using a complex Hadamard matrix, when $p$ and $q$ are on different sides of $2$. We omit the case where $p$ and $q$ are on the same side of $2$, as this is well-known and does not highlight any meaningful distinction between real and complex scalars. The crucial difference between the real and complex settings stems from the fact that a complex Hadamard $n \times n$ matrix exists for every $n$, whereas the existence of real Hadamard matrices depends on the dimension. A classical example of an $n \times n$ Hadamard matrix is the $n$-dimensional \emph{Discrete Fourier Transform (DFT) matrix} $F_n$, defined by
$$(F_n)_{j,k} = \exp\left(2\pi i \frac{jk}{n}\right) \quad \text{ for } \quad 1 \leq j,k \leq n$$
(traditionally, a factor of $\frac{1}{\sqrt{n}}$ is included to make the matrix unitary). In complex dimensions $n=2,3,5$, every complex Hadamard matrix is equivalent (up to permutation of rows and columns and multiplication by diagonal unitary matrices) to the DFT matrix. However, already in dimension $n=4$, there exists a one-parameter infinite family of pairwise non-equivalent complex Hadamard matrices. There is a large body of research on complex Hadamard matrices due to their importance in quantum information theory, but a full classification exists only in low dimensions (see \cite{tadej}). To obtain the upper bound \eqref{lpupbound} on $d_{BM}(\ell_p^n, \ell_q^n)$ for $1 \leq q \leq 2 \leq p \leq \infty$, any Hadamard matrix suffices.

The following lemma is not new; the same estimate with essentially the same proof appears, for example, in Proposition 37.6 of \cite{ntj}. Nevertheless, we believe that for the sake of completeness,  the following short argument using the Riesz-Thorin interpolation theorem is worth recalling.

\begin{lemma}
\label{lem:lpdist}
For any integer $n \geq 1$ and all $1 \leq q \leq 2 \leq p \leq \infty$, we have
\[
d_{BM}(\ell_p^n(\mathbb{C}), \ell_q^n(\mathbb{C})) \leq n^{\alpha},
\]
where $\alpha = \max \left\{ \frac{1}{2} - \frac{1}{p}, \frac{1}{q} - \frac{1}{2} \right\}$.
\end{lemma}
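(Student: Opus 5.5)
We take $A = H$, an $n \times n$ complex Hadamard matrix (for instance the DFT matrix $F_n$), viewed as an operator $\ell_p^n \to \ell_q^n$. We bound $\|H\|_{p\to q}\cdot\|H^{-1}\|_{q\to p}$ by $n^{\alpha}$, using the Riesz--Thorin interpolation theorem along the segments joining the endpoint exponents.

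\textbf{Endpoint estimates for $H$.} The matrix $U = n^{-1/2}H$ is unitary, so $\|H\|_{2\to 2} = \sqrt{n}$. Each entry of $H$ has modulus $1$, so $\|H\|_{1\to\infty} = 1$. Riesz--Thorin between $(1,\infty)$ and $(2,2)$ gives, for $2 \le r \le \infty$ with conjugate $r'$,
\[
\|H\|_{r' \to r} \le n^{1/r}
\]
(the Hausdorff--Young bound). Next, to pass from $\ell_p$ to $\ell_q$ with $q \le 2 \le p$, we use the identity inclusions $\|x\|_{q} \le n^{1/q - 1/r}\|x\|_r$ and $\|x\|_{r'} \le n^{1/r' - 1/p}\|x\|_p$, which hold since $r' \le 2 \le p$ and $q \le 2 \le r$. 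A cleaner route is to factor through $\ell_2$:
\[
\|H\|_{p\to q} \le \|\mathrm{id}\|_{p\to 2}\,\|H\|_{2\to 2}\,\|\mathrm{id}\|_{2\to q} = n^{1/2-1/p}\cdot n^{1/2}\cdot n^{1/q-1/2}.
\]
This alone is too lossy, so the two factors must be balanced more carefully.

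\textbf{Balancing the factors.} The natural plan is to compute everything with the normalized $U$. We have $H^{-1} = n^{-1}H^*$, and $H^*$ is again Hadamard, so it suffices to bound $\|H\|_{p\to q}\,\|H^*\|_{q\to p}/n$. Set $\beta = 1/2-1/p$ and $\gamma = 1/q - 1/2$, and suppose without loss of generality $\beta \ge \gamma$ (the other case follows by duality). For $H^*$ from $\ell_q$ to $\ell_p$, use Hausdorff--Young with $r = p$ together with $\|x\|_{p'} \le n^{1/p'-1/q}\|x\|_q$, which is valid because $q \le 2 \le p$ gives $q \le p'$. This yields
\[
\|H^*\|_{q\to p} \le n^{1/p}\cdot n^{1/q - 1/p'}\quad\text{(up to the correct sign)},
\]
and more precisely $\|H^*\|_{q\to p} \le n^{1/p}\, n^{\max(0,\,1/p'-1/q)}$. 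Since $1/p' = 1/2 + \beta \ge 1/2 + \gamma = 1/q$, the exponent is $1/p + 1/p' - 1/q = 1 - 1/q$. For $H$ from $\ell_p$ to $\ell_q$, factor through $\ell_2$: $\|H\|_{p\to q} \le n^{\beta}\cdot n^{1/2}\cdot n^{\gamma}$. The product divided by $n$ is then
\[
n^{\beta+1/2+\gamma+1-1/q-1} = n^{\beta},
\]
using $\gamma - 1/q + 1/2 = 0$. This equals $n^{\alpha}$ in the case $\beta \ge \gamma$. When $\gamma > \beta$, the symmetric choice $r = q'$ gives $n^{\gamma}$.

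\textbf{Main obstacle.} The main obstacle is bookkeeping: choosing, in each case, which factor to estimate by Hausdorff--Young and which to estimate by the trivial inclusion through $\ell_2$, and checking that every inclusion used goes in the direction with constant $n^{|1/s-1/t|}$ rather than $1$. I would verify the endpoint case $p=\infty$, $q=1$ first, where the bound is $\|H\|_{\infty\to1}\le n^{3/2}$ and $\|H^{-1}\|_{1\to\infty}\le 1/n$, giving $\sqrt n$, and then confirm the general exponent arithmetic above.
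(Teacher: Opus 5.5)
Your proof is correct and is essentially the paper's: one factor is bounded by factoring through $\ell_2$, using that $n^{-1/2}H$ is unitary, and the other by Riesz--Thorin between $\|H\|_{1\to\infty}=1$ and $\|H\|_{2\to 2}=\sqrt{n}$ plus an $\ell_s$-norm comparison, with the roles of $H$ and $H^{-1}$ swapped. You interpolate at $(p',p)$ and compare norms on the source side, while the paper interpolates at $(q,q^*)$ and uses $\|\cdot\|_p\le\|\cdot\|_{q^*}$ on the target side; both give $n^{1-1/q}$.

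One slip: $\|x\|_{p'}\le n^{1/p'-1/q}\|x\|_q$ requires $p'\le q$, which is exactly your case assumption $\beta\ge\gamma$, not $q\le p'$ as you wrote. Your $\max(0,1/p'-1/q)$ formulation is the correct statement, and it in fact yields $n^{\max(\beta,\gamma)}$ directly in both cases, so no duality step is needed.
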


\begin{proof}

By a simple duality argument, we may assume without loss of generality that $\frac{1}{2}-\frac{1}{p} \geq \frac{1}{q}-\frac{1}{2}$, i.e. $\alpha = \frac{1}{2} - \frac{1}{p}$. Indeed, we have $d_{BM}(\ell_p^n(\mathbb{C}), \ell_q^n(\mathbb{C})) = d_{BM}(\ell_{p^*}^n(\mathbb{C}), \ell_{q^*}^n(\mathbb{C}))$, where $p^* = \frac{p}{p-1}$ and $q^* = \frac{q}{q-1}$. Thus, if the opposite inequality holds, we may replace $(p,q)$ with the pair $(q^*, p^*)$, which satisfies $1 \leq q^* \leq 2 \leq p^* \leq \infty$.

Let $H$ be any complex $n \times n$ Hadamard matrix. We note that for every $1 \leq i \leq n$ we have $\|H(e_i)\|_{\infty} = 1$, and hence $\|H\|_{1 \to \infty} \leq 1$. Moreover, $\|H\|_{2 \to 2} = \sqrt{n}$, as $\frac{1}{\sqrt{n}} H$ is a unitary matrix. Thus, the Riesz-Thorin interpolation theorem applied with $\theta = \frac{2}{q^*} \in [0,1]$ yields now
$$\|H\|_{q \to q^*} \leq \|H\|^{1-\theta}_{1 \to \infty} \cdot \|H\|^{\theta}_{2 \to 2}=n^{\frac{\theta}{2}}=n^{\frac{1}{q^*}}=n^{1-\frac{1}{q}}.$$
Therefore, since  $p \geq q^*$, it follows that
$$\|H\|_{q \to p} \leq \|H\|_{q \to q^*} \leq n^{1-\frac{1}{q}}.$$
To upper bound the norm $\|H^{-1}\|_{p \to q}$ we will use standard estimates for the $\ell_p$ norms, following from the H\"older inequality. Clearly, as the matrix $\frac{1}{\sqrt{n}}H$ is unitary, we have $H^{-1}=\frac{1}{n}H^*$, so the matrix $\sqrt{n}H^{-1}$ is also unitary. Thus, for any $x \in \mathbb{C}^n$ we have
\begin{equation}
\label{hadamard}
\frac{\|H^{-1}x\|_q}{\|x\|_p} \leq \frac{n^{\frac{1}{q}-\frac{1}{2}}\|H^{-1}x\|_2}{n^{\frac{1}{p}-\frac{1}{2}}\|x\|_2} = n^{\frac{1}{q} - \frac{1}{p} - \frac{1}{2}},
\end{equation}
which implies $\|H^{-1}\|_{p \to q} \leq n^{\frac{1}{q} - \frac{1}{p} - \frac{1}{2}}$. Altogether, we have
$$d_{BM}(\ell_p^n(\mathbb{C}), \ell_q^n(\mathbb{C})) \leq \|H\|_{q \to p} \cdot \|H^{-1}\|_{p \to q} \leq n^{1-\frac{1}{q}} \cdot n^{\frac{1}{q} - \frac{1}{p} - \frac{1}{2}} = n^{\frac{1}{2} - \frac{1}{p}},$$
as desired.
\end{proof}

The proof above works equally well in real dimensions $n$ for which a real $n \times n$ Hadamard matrix exists (conjectured to be $n=1, 2$ and $n=4k$ for $k \in \mathbb{N}$). In such cases, the same upper bound \eqref{lpupbound} holds for $d_{BM}(\ell_p^n(\mathbb{R}), \ell_q^n(\mathbb{R}))$ when $p$ and $q$ are on different sides of $2$. However, this bound is not true for all $n$: by \cite{kobosvarivoda} we have $d_{BM}(\ell_1^3(\mathbb{R}), \ell_{\infty}^3(\mathbb{R})) = \frac{9}{5} > \sqrt{3}$. Moreover, not all real Hadamard matrices yield the same upper bound. For example, there are five equivalence classes of order 16 Hadamard matrices, established by Hall \cite{hall} (see also \cite[pp.~419--421]{wallis} for the description of $5$ classes found by Hall). Recall that two Hadamard matrices are equivalent if one can be obtained from the other by permuting rows or columns and multiplying rows or columns by -1. Using computer aided computation, one can verify that Hall Class I, II, and III matrices (using terminology from \cite{wallis}) yield an upper bound of $4$, whereas Hall Class IV, V matrices lead to the sharper upper bound $d_{BM}(\ell_1^{16}(\mathbb{R}), \ell_{\infty}^{16}(\mathbb{R})) \leq 3.5$.

In the following remark, we note that Conjecture \ref{conj2}, if true, would lead to an interesting corollary about complex Hadamard matrices and unimodular vectors.

\begin{remark}
If Conjecture \ref{conj2} holds, i.e., $d_{BM}(\ell_1^n(\mathbb{C}), \ell_{\infty}^n(\mathbb{C})) = \sqrt{n}$, then for any $n \times n$ complex Hadamard matrix $H$, there exists a unimodular vector $x \in \mathbb{C}^n$ such that $\frac{1}{\sqrt{n}} H x$ is also unimodular. This follows from the fact that, in such case, the estimates in Lemma \ref{lem:lpdist} must be sharp, and in particular, we must have equality in the estimate $\|H^{-1}\|_{\infty \to 1} \leq \sqrt{n}$, which implies equality in \eqref{hadamard} for some $x$. This forces all entries of $x$ to have equal modulus $c > 0$ and all entries of $H^{-1} x$ to have modulus $\sqrt{n} c$. The claim follows since $n H^{-1} = H^*$ is an arbitrary complex Hadamard matrix.
\end{remark}

In other words, if the claim above fails for some complex $n \times n$ Hadamard matrix, then Conjecture \ref{conj2} (and hence also Conjectures \ref{conj1} and \ref{conj3}) fails. For the DFT matrix, such a unimodular vector $x$ can be given explicitly and the claim verified with a direct computation involving the generalized Gauss sums. It is plausible that a direct proof (without referring to the Banach--Mazur distance) of this claim exists, that works for any complex $n \times n$ Hadamard matrix.

\section{The two-dimensional case}
\label{sec2d}

In this section, we prove the two-dimensional cases of our conjectures. We begin with Conjecture \ref{conj1} for $n=2$, the complex analogue of Spencer's theorem. Although the proof is straightforward, it illustrates the geometric idea that will be significantly extended in the three-dimensional case.

\begin{proof}[Proof of Conjecture \ref{conj1} for $n=2$.]
Let $a_1, a_2 \in \mathbb{C}^2$ satisfy $\|a_1\|_{\infty} \leq 1$ and  $\|a_2\|_{\infty} \leq 1$. Suppose, for contradiction, that for every unimodular vector of the form $x = (1, e^{i \varphi})$ (where $\varphi \in [0, 2\pi)$) at least one of the inequalities $|\langle a_1, x \rangle| > \sqrt{2}$ or $|\langle a_2, x \rangle| > \sqrt{2}$ holds. In particular,
$$|\langle a_1, x \rangle |^2 > 2 \geq \|a_1\|^2_2.$$ 
Thus, if we write $a_1=(a, b)$, then
$$|a + b e^{-i \varphi}|^2 > |a|^2 + |b|^2,$$
which rewrites equivalently as $\re(a\overline{b} e^{i \varphi}) > 0$. This inequality is clearly impossible when $a=0$ or $b=0$. Otherwise, it is satisfied on an open interval of length $\pi$ in $\varphi$, which corresponds to an open semicircle on the complex unit circle. Similarly, the inequality $|\langle a_2, x \rangle| > \sqrt{2}$ is also satisfied on (at most) open semicircle on the complex unit circle. However, the unit circle cannot be covered by the union of two open semicircles. This contradiction completes the proof.
\end{proof}

The remainder of this section is devoted to proving Conjecture \ref{conj3} for $n=2$, i.e., determining the exact Banach--Mazur distance between $\ell^2_p(\mathbb{C})$ and $\ell^2_q(\mathbb{C})$ for $1 \leq q \leq 2 \leq p \leq \infty$. To our knowledge, even the case of $q=1$, $p=\infty$ has not appeared in the literature. The key ingredient of our approach is a volume argument. Let us first note, that it is not difficult to determine the maximal possible volume of a linear copy of the unit ball $\mathcal{B}_1^n(\mathbb{C})$ inside $\mathcal{B}_{\infty}^n(\mathbb{C})$ for any $n \geq 1$. Indeed, this maximal volume is the maximum of $|\det A|$ over all linear operators $A: \mathbb{C}^n \to \mathbb{C}^n$ satisfying $\|A\|_{1 \to \infty} \leq 1$.  In particular, for such $A$ we have $\|A(e_i)\|_2 \leq \sqrt{n} \|A(e_i)\|_{\infty} \leq 1$ for every $i=1, \ldots, n$. Thus, it follows from Hadamard's inequality that $|\det A| \leq n^{\frac{n}{2}}$, with equality attained for any complex Hadamard matrix.

It is much less clear how to determine the maximal volume copy of $\mathcal{B}_{\infty}^n(\mathbb{C})$ inside $\mathcal{B}_1^n(\mathbb{C})$. The real case would suggest that the standard position of these balls should be far from maximizing the volume. It turns out that in the case of $n=2$, the standard position $\frac{1}{2}\mathcal{B}_{\infty}^2(\mathbb{C}) \subseteq \mathcal{C}_1^2(\mathbb{B}) $ is actually optimal, i.e. the maximal possible volume is equal to $\frac{1}{4}$.

\begin{lemma} \label{lem:2d_bm_volume_ineq}
For any complex $2 \times 2$ matrix $A$,
\begin{equation} \label{ineq:2d_bm_vol}
 \|A\|_{\infty \to 1} \geq 2 \sqrt{|\det {A}|}.
\end{equation}
\end{lemma}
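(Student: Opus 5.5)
The plan is to avoid any volume computation and argue with a clever pair of test vectors together with multiplicativity of determinants. Write $A=\begin{pmatrix} a & b\\ c & d\end{pmatrix}$ and $N=\|A\|_{\infty\to 1}$. For a unimodular $t$, set
$$x=(1,t), \qquad y=(1,-t).$$
Both are unimodular, so $\|x\|_\infty=\|y\|_\infty=1$, and hence $\|Ax\|_1\le N$ and $\|Ay\|_1\le N$. Let $M=[x\ y]$ be the $2\times 2$ matrix with columns $x,y$; then $\det M=-2t$. Consequently
$$|\det(AM)|=2|\det A|.$$

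Write $Ax=(u_1,u_2)$ and $Ay=(v_1,v_2)$. The triangle inequality gives
$$2|\det A|=|u_1v_2-u_2v_1|\le |u_1||v_2|+|u_2||v_1|.$$
With an arbitrary $t$ this only yields $2|\det A|\le N^2$, which is off by a factor of $2$. The key step is therefore to choose $t$ so that the first coordinates balance, i.e. $|u_1|=|v_1|$, that is $|a+bt|=|a-bt|$.

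Expanding, this condition is equivalent to $\re(a\overline{b}\,\overline{t})=0$. If $ab=0$ it holds for every $t$. Otherwise it holds for the two unimodular $t$ with $\overline{t}\,a\overline{b}\in i\mathbb{R}$. So a suitable $t$ always exists.

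With this $t$, put $\alpha=|u_1|=|v_1|$, $\beta=|u_2|$, $\gamma=|v_2|$. The constraints are $\alpha+\beta\le N$ and $\alpha+\gamma\le N$, and the bound becomes
$$2|\det A|\le \alpha(\beta+\gamma)\le \alpha(2N-2\alpha)\le \frac{N^2}{2},$$
where the last step maximizes the quadratic $2\alpha(N-\alpha)$ at $\alpha=N/2$. This gives $4|\det A|\le N^2$, which is exactly \eqref{ineq:2d_bm_vol}.

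The only nontrivial point is finding this balancing choice of $t$. The naive estimate loses precisely a factor of $2$, and one free phase suffices to equalize one row, which is all the final optimization needs. I expect no further obstacles beyond checking the elementary expansion $|a+bt|^2-|a-bt|^2=4\re(a\overline{b}\,\overline{t})$.
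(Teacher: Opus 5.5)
Your proof is correct, and it takes a genuinely different route from the paper's.

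\textbf{The paper's argument.} It uses a single test vector. The angles $\varphi$ with $|a+be^{i\varphi}|^2\ge|a|^2+|b|^2$ form a closed arc of length at least $\pi$, and likewise for the second row. Since two closed semicircles always meet, there is a common angle $\alpha$, which gives $\|A\|_{\infty\to1}\ge\sqrt{|a|^2+|b|^2}+\sqrt{|c|^2+|d|^2}$. AM--GM and Hadamard's inequality for the rows then finish the proof.

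\textbf{Your argument.} You use the two test vectors $(1,\pm t)$ and the multiplicativity $\det(AM)=\det A\,\det M$ with $|\det M|=2$. You apply the triangle inequality to the expanded $2\times2$ determinant. The phase choice equalizing $|a+bt|$ and $|a-bt|$ then lets a one-variable maximization recover exactly the missing factor.

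\textbf{What each route buys.} Yours handles $\det A$ directly and needs neither Hadamard's inequality nor the semicircle-intersection step. The paper's route yields the stronger, determinant-free intermediate bound by the sum of the Euclidean row norms. It also reuses the same semicircle-covering idea as the proof of Conjecture~\ref{conj1} for $n=2$.

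\textbf{How they are related.} Your balancing condition $\re(a\overline{b}\,\overline{t})=0$ forces $|a\pm bt|=\sqrt{|a|^2+|b|^2}$. Since $|c+dt|^2+|c-dt|^2=2(|c|^2+|d|^2)$, one of your two test vectors already realizes the paper's common angle. So your setup would recover the paper's stronger bound as well.
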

\begin{proof}
    Let us write $A = \begin{bmatrix} a & b \\ c & d
    \end{bmatrix}$ for some $a, b, c, d \in \mathbb{C}$. We consider the inequality
$$|a + b e^{i\varphi}|^2 \geq |a|^2 + |b|^2$$
for a variable $\varphi \in [0, 2 \pi]$. Similarly like in the previous proof, we deduce that, in this case, there is a closed interval of length at least $\pi$ in $\varphi$ where this inequality is satisfied (exactly $\pi$ if $a, b \neq 0$). Analogously, there is a closed interval of length at least $\pi$ in $\psi \in [0, 2 \pi]$ such that $|c + d e^{i\psi}|^2 \geq |c|^2+|d|^2$. Since any two closed semicircles on the circle intersect, there exists a common angle $\alpha \in [0, 2\pi]$ such that
$$|a + b e^{i \alpha}| \geq \sqrt{|a|^2 + |b|^2} \quad \text{ and } \quad |c + d e^{i \alpha}| \geq \sqrt{|c|^2 + |d|^2}.$$
Therefore, we can now estimate
\begin{align*}
\|A\|_{\infty \to 1}
&\geq |a + b e^{i \alpha}| + |c + d e^{i \alpha}| \\
&\geq \sqrt{|a|^2 + |b|^2} + \sqrt{|c|^2 + |d|^2} \\
&\geq 2 \sqrt[4]{(|a|^2 + |b|^2)(|c|^2 + |d|^2)} \\
&\geq 2 \sqrt{|\det A|},
\end{align*}
where the second estimate follows from the AM-GM inequality, and the last uses Hadamard's inequality for $ 2\times 2$ matrices.

\end{proof}

With the previous lemma established, we are ready to prove Conjecture \ref{conj3} for $n=2$.

\begin{proof}[Proof of Conjecture \ref{conj3} for $n=2$.]

By the same duality argument as in the beginning of the proof Lemma \ref{lem:lpdist}, we may assume without loss of generality that $\frac{1}{q}-\frac{1}{2} \geq \frac{1}{2}-\frac{1}{p}$. Let us denote $d = d_{BM}(\ell_p^2(\mathbb{C}), \ell_q^2(\mathbb{C}))$. By Lemma \ref{lem:lpdist} we have $d \leq 2^{\frac{1}{q}-\frac{1}{2}}$, so it is enough to prove the reverse inequality. Let $A: \mathbb{C}^2 \to \mathbb{C}^2$ be an operator such that $\|A\|_{q \to p} = 1$, $\|A^{-1}\|_{p \to q} = d$ and let $a_1, a_2 \in \mathbb{C}^2$ be the columns of $A$. By H\"older's inequality
$$1 \geq \|A(e_i)\|_p = \|a_i\|_p \geq 2^{\frac{1}{p}-\frac{1}{2}} \|a_i\|_2,$$
which gives us $\|a_i\|_2 \leq 2^{\frac{1}{2}-\frac{1}{p}}$ for $i=1, 2$. Thus, Hadamard's inequality now yields
$$\sqrt{|\det A|} \leq \sqrt{ \|a_1\|_2 \|a_2\|_2} \leq 2^{\frac{1}{2}-\frac{1}{p}}$$
and hence $\sqrt{|\det A^{-1}|} \geq 2^{\frac{1}{p}-\frac{1}{2}}$. Therefore, Lemma \ref{lem:2d_bm_volume_ineq} applied to $A^{-1}$ gives us $\|A^{-1}\|_{\infty \to 1} \geq 2^{\frac{1}{2}+\frac{1}{p}}$. On the other hand, from the assumptions about the operator $A$ and the standard containments between the unit balls of the $\ell_p$ spaces, it follows that
$$2^{-\frac{1}{p}} A^{-1}(\mathcal{B}_{\infty}^2) \subseteq A^{-1}(\mathcal{B}_p^2) \subseteq d \mathcal{B}_q^2 \subseteq d2^{1-\frac{1}{q}} \mathcal{B}_1^2,$$
and therefore
$$A^{-1} (\mathcal{B}_{\infty}^2) \subseteq d 2^{1+\frac{1}{p}-\frac{1}{q}} \mathcal{B}_1^2.$$
Hence
$$d \cdot 2^{1+\frac{1}{p}-\frac{1}{q}} \geq \|A^{-1}\|_{\infty \to 1} \geq 2^{\frac{1}{2}+\frac{1}{p}},$$
which simplifies to $d \geq 2^{\frac{1}{q} - \frac{1}{2}}$, as required.

\end{proof}

If the standard position $\frac{1}{n} \mathcal{B}_{\infty}^n(\mathbb{C}) \subseteq \mathcal{B}_1^n(\mathbb{C})$ would be volume-optimal for all $ n \geq 2$, then Conjecture \ref{conj3} would follow with the same argument as for $n=2$. However, this fails in any dimension $n \geq 3$, as shown in the following remark.

\begin{remark}
For every $n \geq 3$ there exists a complex $n \times n$ matrix $A$ such that
$$ \|A\|_{\infty \to 1} < n \sqrt[n]{|\det{A}|}.$$

We construct a counterexample inductively. For the base case of $n=3$, we used numerical optimization to find a complex $3 \times 3$ matrix with $|\det A_3| = 1$ such that $\|A_3\|_{\infty \to 1} < 3$. The resulting matrix is (rounded to 4 decimal places):
$$
A_3 \approx 
\begin{bmatrix} 
-0.3857 - 0.5131i &  0.1795 - 0.5142i &  0.0675 - 0.5393i \\
-0.0179 - 0.6519i & -0.6798 + 0.2941i &  0.0611 + 0.1143i \\
-0.3182 - 0.2582i &  0.2765 - 0.2900i & -0.1519 + 0.8092i 
\end{bmatrix}.
$$
Numerical computation yields $\|A_3\|_{\infty \to 1} \approx 2.9978 < 3$.

Suppose now, that for $n \geq 3$ we have constructed a complex $n \times n$ matrix $A_n$ such that $| \det A_n | = 1$ and $\|A_n\|_{\infty \to 1} < n$. Let $A_{n+1}$ be $(n+1) \times (n+1)$ matrix defined as
$$A_{n+1}:=\left[ 
\begin{array}{c|c} 
  1 & 0 \\ 
  \hline 
  0 & A_n 
\end{array} \right].$$
For any vector $x = \left( x_0, x_1, \ldots, x_n\right) \in \mathbb{C}^{n+1}$ with $\|x\|_\infty \leq 1$, we now have 
\[
\|A_{n+1}x\|_1 = |x_0| +\|A_n \left(x_1, \ldots, x_n) \right)\|_1 < 1 + n
\]
where we used $|x_0| \leq 1$ and $\|A_n\|_{\infty \to 1} < n$. Therefore $\|A_{n+1}\|_{\infty \to 1} < n+1$ as desired.
\end{remark}

\section{The three-dimensional case}
\label{sec3d}

In this section we prove Conjecture \ref{conj1} for $n=3$. The main idea of the proof is as follows. In the two-dimensional case (Section \ref{sec2d}), we showed that if the conjecture fails, then the unit circle can be covered by the union of two open semicircles, which is obviously impossible. Similarly, for $n=3$ we will establish that the assumption that the conjecture does not hold implies that the two-dimensional torus, i.e., the product of two circles, can be covered by the union of three toric sets, which we will call toric bodies. Hence, our main goal will then be to prove that such a covering is impossible.

Let the torus be given as  $\mathbb{T} = \mathbb{R}^2 / (2\pi \mathbb{Z})^2$, so that points $(\theta_1, \theta_2) \in \mathbb{R}^2$ and $ (\theta'_1, \theta'_2) \in \mathbb{R}^2$ are identified if $\theta'_1 = \theta_1 + 2k\pi$ and $\theta'_2 = \theta_2 + 2l\pi$ for some $k, l \in \mathbb{Z}$.

\begin{definition}
    Let $a \in \mathbb{C}^3$ be a vector satisfying $\|a\|_\infty \leq 1$. We define the \emph{toric body} centered in $a$ as the set
$$B(a) = \left \{ (\theta_1, \theta_2) \in \mathbb{T} : \left | \left \langle a,  (1, e^{i \theta_1}, e^{i \theta_2}) \right \rangle \right | > \sqrt{3} \right \}.$$
A similar set defined with weak inequality ($\geq$) instead is called a \emph{closed toric body}. If the vector $a$ is unimodular, then $B(a)$ is called a \emph{unimodular toric body}.
\end{definition}

With this terminology, the $3$--dimensional case of Conjecture~\ref{conj1} can be restated as follows: for any vectors $a_1, a_2, a_3 \in \mathbb{C}^3$ with $\|a_i\|_{\infty} \leq 1$, the union of the toric bodies centered in these vectors fails to cover the torus. In other words, we have
$$\mathbb{T} \not \subseteq B(a_1) \cup B(a_2) \cup B(a_3).$$

Note that closed toric bodies can cover the torus -- for example, when $a_1, a_2, a_3$ are the row vectors of a $3\times 3$ complex Hadamard matrix, equality holds in \eqref{parseval}, so the closed bodies centered in $a_i$'s cover $\mathbb{T}$.

To prove impossibility of a covering of the torus with open toric bodies, we introduce a grid $\mathcal{G} \subseteq \mathbb{T}$ of $9$ points defined as
    \begin{equation}
        \mathcal{G} = \left \{ \left [ \left(\frac{2\pi}{3} \cdot j, \frac{2\pi}{3} \cdot k\right) \right ] \; : \; j, k = -1, 0, 1 \right \}.
    \end{equation}

Since the torus is two-dimensional, the considered situation can be visualized with the help of a two-dimensional periodic image. Figure \ref{fig:dft_blobs} shows a covering of the torus by three closed toric bodies, centered at the row vectors of the standard DFT matrix, each in a different color. The picture also shows the rectangular grid $\mathcal{G}$ and the periodic structure is evident.

\begin{figure}[h] 
    \caption{Visualization of the grid $\mathcal{G}$ and three closed toric bodies centered at the rows of the DFT matrix, that cover $\mathbb{T}$. Black rectangle represents the fundamental domain $[-\pi, \pi)^2$.} \label{fig:dft_blobs}
    \centering
    \includegraphics[width=.9\linewidth]{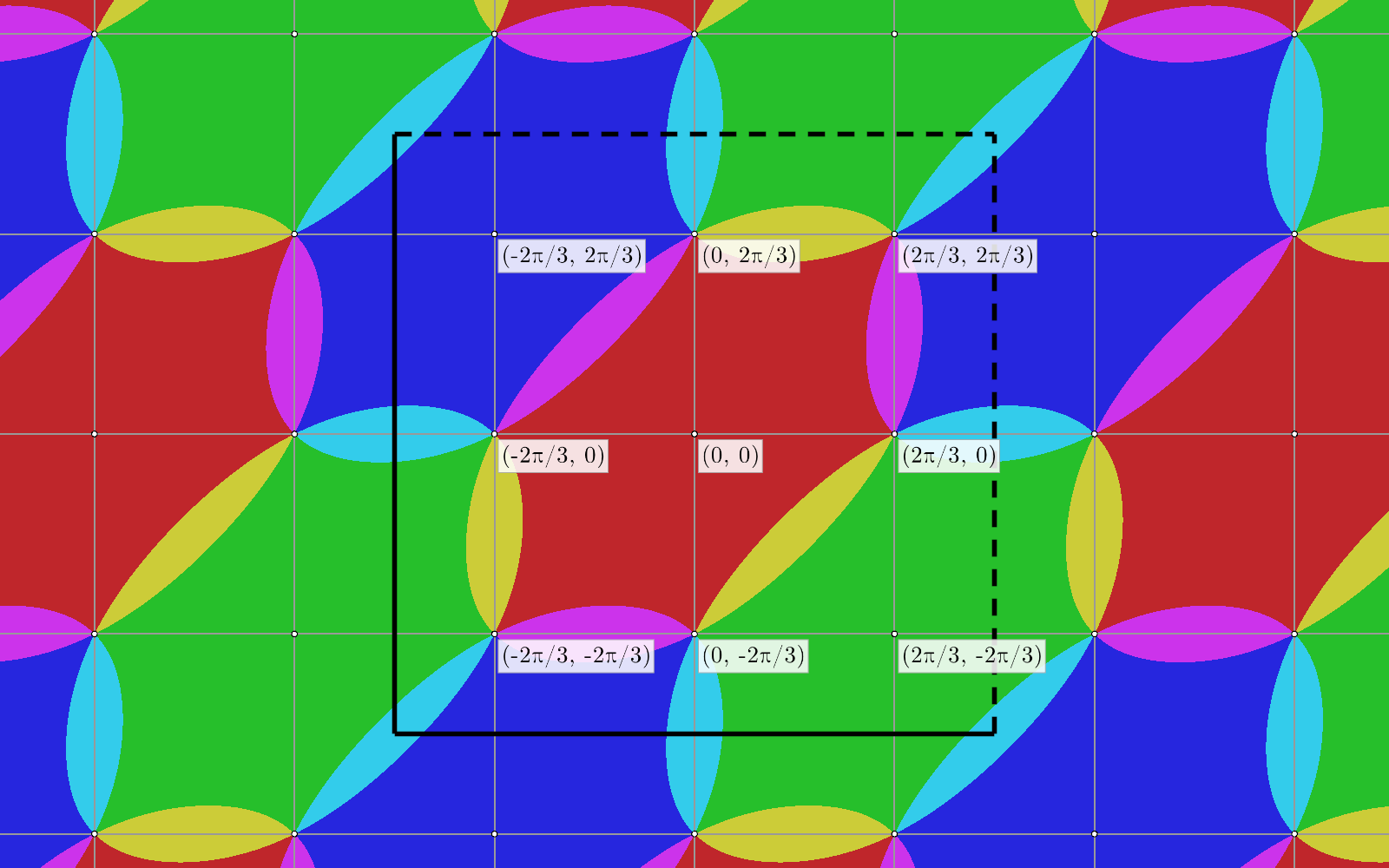}
\end{figure}


Of the $9$ points of the grid $\mathcal{G}$, $6$ lie on the boundary of each toric body. Thus, if the toric bodies are assumed to be open, some grid points remain uncovered. One subtle, non-obvious feature of this visualization is a hidden symmetry. Namely, the situation exhibits complete symmetry with respect to the vertical and horizontal directions, as well as the diagonal direction $x=y$, i.e., the same properties hold for each of these three directions. For instance, consider the problem of covering segments formed by three points from $\mathcal{G}$ with a single toric body. Segments parallel to these three directions are equally easy to cover, this can even be done with some slack. For illustration, observe Figure \ref{fig:b005} where the segment along the $x=y$ diagonal is easily covered, and Figure \ref{fig:vert_strip_cover} shows segments along the vertical direction covered this way. The same holds for the horizontal direction. In contrast, segments of three grid points parallel to the other diagonal, $x=-y$, are impossible to cover by a single toric body. This difference can be explained by the fact that, when converted back to the language of vectors in $\mathbb{C}^3$, the three grid points determining a segment parallel to the diagonal $x=-y$ correspond to an orthogonal basis for $\mathbb{C}^3$.


Now, we shall examine the possible shapes of toric bodies in greater detail. It is clear that multiplying the coordinates of the center $a$ by complex numbers of modulus $1$ merely translates the toric body on the torus. Therefore, all unimodular toric bodies look exactly the same and differ only by a translation. However, non-unimodular toric bodies can take significantly different forms. Let us consider a vector $a = (t, x,y) \in \mathbb{C}^3$ with $|x|=|y|=1$ and $|t| \in [0,1]$. When $|t|=1$, we get a unimodular body, while for $|t|=0$ the toric body $B(a)$ becomes a rectangular band wrapping around the torus. The intermediate values of $|t|$ give a continuous deformation of the unimodular body towards a strip. For sufficiently small $|t|$, the toric body $B(a)$ is non-contractible (it wraps around the torus). Examples are shown in Figures \ref{fig:blobscompa} and \ref{fig:blobscompb}.


\begin{figure}[h!] 
    \centering 
    
    \begin{subfigure}[b]{0.45\linewidth}
        \centering
        \includegraphics[width=\linewidth]{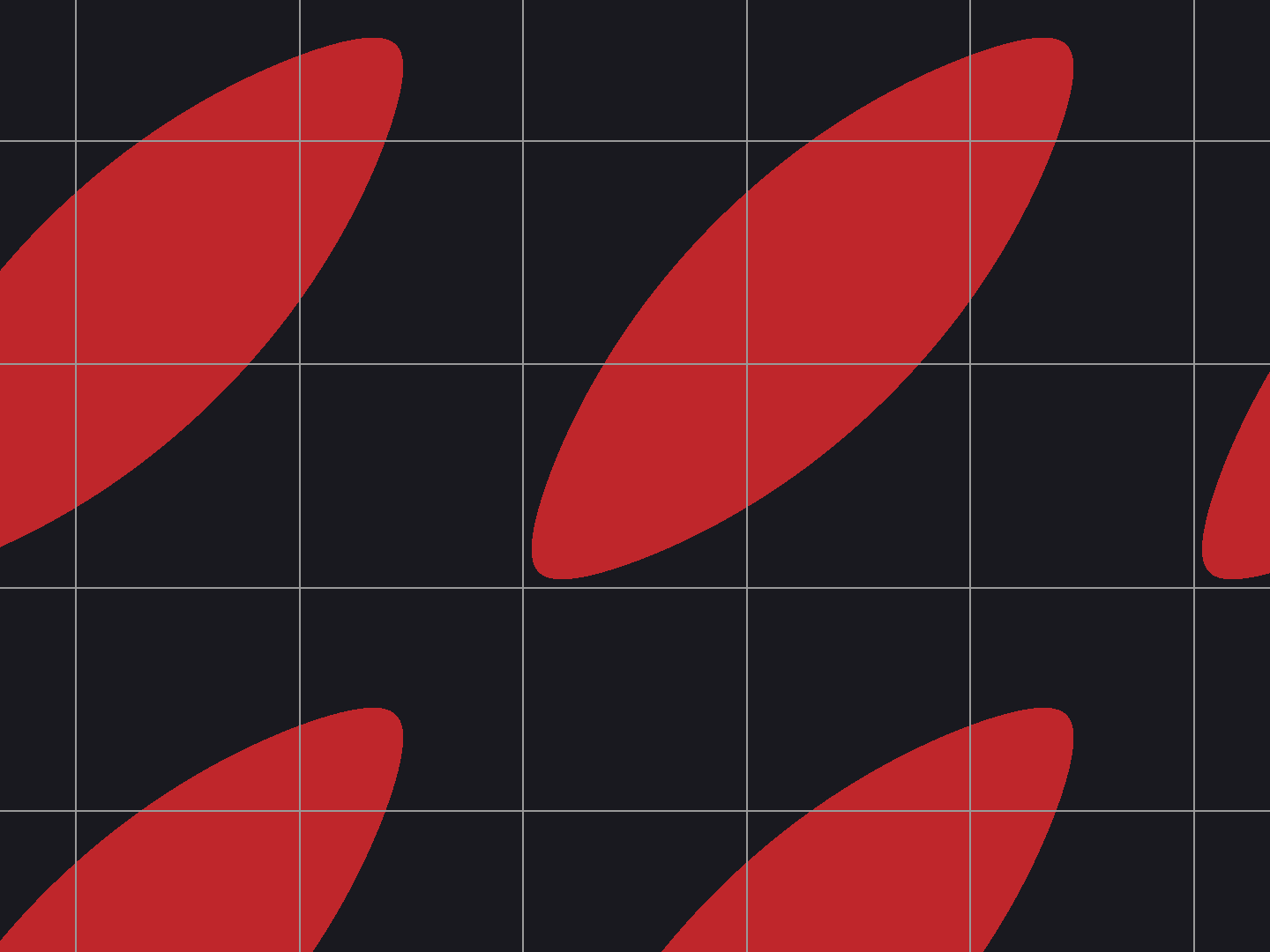}
        \caption{$|t|=0.36$}\label{fig:blobscompa}
        \label{fig:b036}
    \end{subfigure}
    \hfill 
    \begin{subfigure}[b]{0.45\linewidth}
        \centering
        \includegraphics[width=\linewidth]{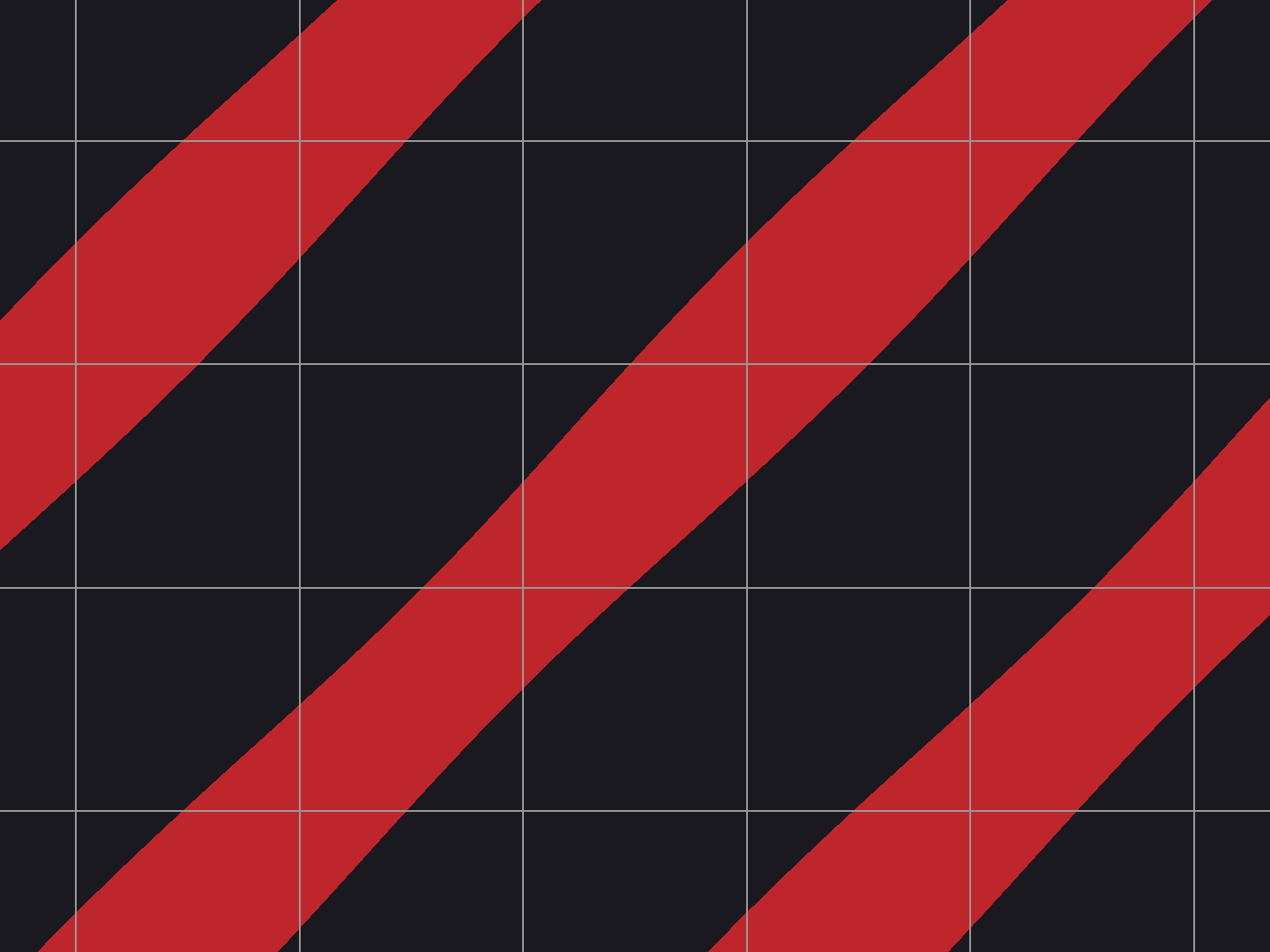}
        \caption{$|t|=0.05$}\label{fig:blobscompb}
        \label{fig:b005}
    \end{subfigure}
    
    \caption{Comparison of toric bodies for different values of $|t| $.}
    \label{fig:blobs_comparison}
\end{figure}

It is not hard to guess that strip-like closed toric bodies can also cover the torus. An example using three vertical strips is shown in Figure \ref{fig:long_blobs}. Analogous coverings exist using horizontal strips or strips parallel to the diagonal $x=y$.

\begin{figure}[h] 
    \caption{Covering of the torus by three strip-like closed toric bodies $\overline{B(a_j)}$ where $a_j = (1, \omega^j, 0)$ for $j=1,2,3$.}\label{fig:long_blobs}
    \centering
    \label{fig:vert_strip_cover}
    \includegraphics[width=.9\linewidth]{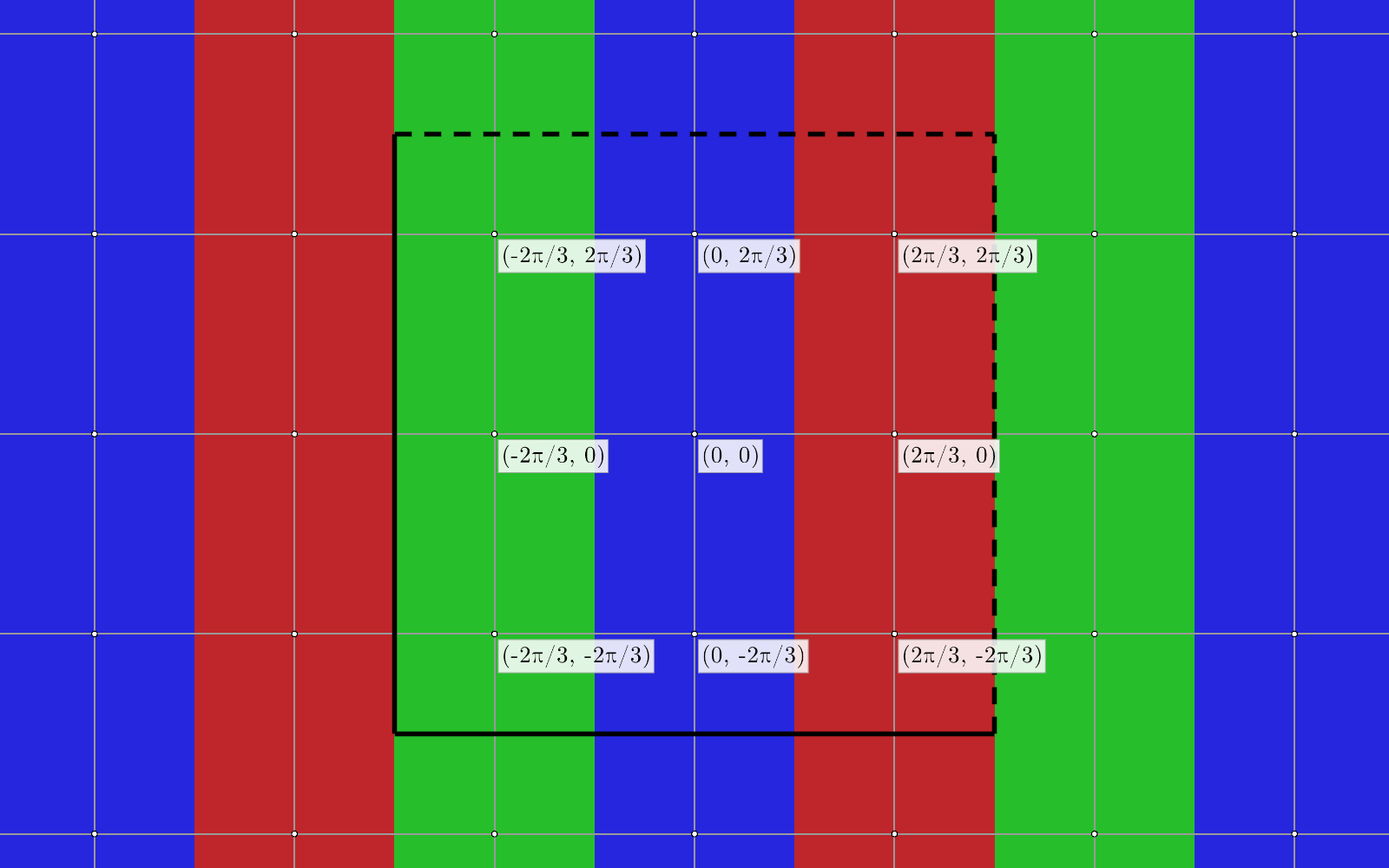}
\end{figure}

This corresponds to an equality case in Conjecture \ref{conj1} for $n=3$ with non-unimodular vectors $a_1, a_2, a_3 \in \mathbb{C}^3$. Specifically, let us take $a_j=(1, \omega^j, 0)$ for $j = 1, 2, 3$, where $\omega = e^{\frac{2\pi}{3}}= -\frac{1}{2} + \frac{\sqrt{3}}{2} i$ denotes the third root of unity. For a unimodular vector $x = (x_1, x_2, x_3)$, the condition $|\langle  x, a_j \rangle | \geq \sqrt{3}$ is equivalent to
$$|x_1|^2 + |x_2|^2 + 2 \re(x_1 \overline{x_2} \omega^j) \geq 3,$$
or just $\re(x_1 \overline{x_2} \omega^j) \geq \frac{1}{2}$. The set of complex numbers $z$ satisfying $|z|=1$ and $\re(z) \geq \frac{1}{2}$ is an arc of the complex unit circle of the angular measure $\frac{2}{3}\pi$. Clearly, any point on the unit circle can be rotated by $0^{\circ}$, $120^{\circ}$ or $240^{\circ}$ (which corresponds to multiplying by $1$, $\omega$ or $\omega^2$, respectively), so that it will belong to this arc. Hence, the inequality $|\langle  x, a_j \rangle | \geq \sqrt{3}$ will be satisfied for some $j \in \{1, 2, 3\}$. 

One may now ask what other shapes of a toric body are possible, when its center $a=(t, x, y)$ has two or more coordinates with modulus less than $1$. It turns out, that such a situation can be omitted from consideration, as the toric body just gets smaller in this case. This is proved in the following lemma, which shows that every toric body is contained in a toric body with the center having at most one coordinate of modulus less than $1$.

\begin{lemma}\label{lem:blob_expansion}
Let $a \in \mathbb{C}^3$ be a non-zero vector such that $\|a\|_{\infty} \leq 1$. Then, there exists a vector $v \in \mathbb{C}^3$ with at most one coordinate of modulus less than $1$ and such that $B(a) \subseteq B(v)$.
\end{lemma}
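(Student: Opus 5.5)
The plan is to increase moduli of coordinates while never losing a point of $B(a)$, using two elementary facts. First, translation invariance: multiplying coordinates of the center by unimodular numbers only translates the toric body. Second, homogeneity: $|\langle ca, u\rangle| = c\,|\langle a,u\rangle|$, so for $c \geq 1$ we have $B(a) \subseteq B(ca)$.

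\emph{Normalization.} Since $a \neq 0$, replace $a$ by $a/\|a\|_\infty$. This keeps $\|a\|_\infty = 1$ and can only enlarge the toric body. So we may assume one coordinate has modulus $1$. Choose the unimodular phases so that $a = (a_1,a_2,a_3)$ has nonnegative real entries. The normalization to $a_1 = 1$ is done by translating the torus, and the reduction to a coordinate of modulus $1$ in position $1$ uses the symmetry of the problem under permuting coordinates (after a change of variables on $\mathbb{T}$). If at most one of $a_2=r$, $a_3=s$ is less than $1$, we are done. Otherwise $0 \leq r,s < 1$, and for $u=(1,e^{i\theta_1},e^{i\theta_2})$ we have the explicit quadratic form
\[
F_a(\theta) := |\langle a,u\rangle|^2 = 1+r^2+s^2+2r\cos\theta_1+2s\cos\theta_2+2rs\cos(\theta_1-\theta_2),
\]
and $B(a)=\{F_a>3\}$.

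\emph{Reduction to a one-parameter deformation.} I would deform only the radial moduli, $a(\lambda) = (1, r(\lambda), s(\lambda))$, along a segment in the square $[0,1]^2$ through $(r,s)$. For each fixed $\theta$, the map $\lambda \mapsto |\langle a(\lambda),u\rangle|$ is convex, being the modulus of an affine function. Hence on $B(a)$ the value at $\lambda=0$ is dominated by the larger endpoint value. By itself this only gives $B(a)\subseteq B(a(\lambda_-))\cup B(a(\lambda_+))$, which is not enough.

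To get a single body, I would exploit the explicit form of $F_a$. On $B(a)$ we have $F_a>3\geq 1+r^2+s^2+\dots$, and since $1+r^2+s^2<3$, the cross terms $2r\cos\theta_1+2s\cos\theta_2+2rs\cos(\theta_1-\theta_2)$ must be strictly positive and fairly large there. The concrete choice I would try first is to move toward the corner where the larger of $r,s$ becomes $1$, say $(r,s)\mapsto(1,s')$. I would choose $s'$ so that the point $(r,s)$ lies on the segment from $(1,s')$ to $(1,1)$ or to $(r,1)$. Then I would verify directly that $\partial F/\partial r = 2r + 2\cos\theta_1 + 2s\cos(\theta_1-\theta_2)$ is nonnegative on the region $F_a > 3$. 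If it is, moving $r$ up to $1$ preserves membership in the body, and $v=(1,1,s)$ works. The inequality needed is that $F_a>3$ forces $\cos\theta_1 + s\cos(\theta_1-\theta_2) \geq -r$. I would attempt this by writing $F_a - 3 = |1+re^{i\theta_1}+se^{i\theta_2}|^2-3$ and bounding $|1+se^{i\theta_2}|\leq 1+s<2$. A large modulus then forces $e^{i\theta_1}$ to be roughly aligned with $1+se^{i\theta_2}$, and the real part of $e^{-i\theta_1}(1+se^{i\theta_2})$ equals $\cos\theta_1 + s\cos(\theta_2-\theta_1)$. Indeed, $|w+re^{i\theta_1}|^2 = |w|^2+r^2+2r\,\re(\overline{w}e^{i\theta_1})$ with $w=1+se^{i\theta_2}$, and $\partial_r$ of this is $2r+2\re(\overline{w}e^{i\theta_1})$. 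So the requirement is $\re(\overline w e^{i\theta_1})\geq -r$ whenever $|w|^2+r^2+2r\re(\overline w e^{i\theta_1})>3$. If $\re(\overline w e^{i\theta_1})<-r$, then the left side is less than $|w|^2 - r^2 \leq (1+s)^2 - r^2$, and I need this to be at most $3$.

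\emph{Main obstacle.} The bound $(1+s)^2 - r^2 \leq 3$ fails when $s$ is close to $1$ and $r$ is small, so the roles of $r$ and $s$ must be chosen carefully. One should raise the coordinate $r$ only when the complementary bound $(1+s)^2-r^2\leq 3$ holds. In the remaining regime, where $s > \sqrt{3+r^2}-1$ and also $r > \sqrt{3+s^2}-1$ fails the symmetric check, I would try raising the other coordinate instead, or apply the same monotonicity argument after permuting which coordinate is normalized to $1$. Showing that some choice always works, possibly after first rescaling so that a different coordinate attains modulus $1$, is the step I expect to be delicate.
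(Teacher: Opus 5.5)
Your core computation is the same as the paper's. You keep the phase of a sub-unit coordinate and raise its modulus $\rho$ from $r$ to $1$. You observe that $F(\rho)=\rho^2+2\rho\,\re(\overline{w}e^{i\theta_1})+|w|^2$, and that on $B(a)$ one must have $\re(\overline{w}e^{i\theta_1})\ge -r$, since otherwise $F<|w|^2-r^2\le(1+s)^2-r^2$.

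As written, though, the proposal does not close. You leave open which coordinate to raise, and you describe a ``remaining regime'' you expect to be delicate. The missing step is one line: always raise the \emph{larger} of the two sub-unit moduli. If $r\ge s$ (and $r<1$), then
\[
(1+s)^2-r^2\le(1+r)^2-r^2=1+2r<3 .
\]
So the check you need always holds. The regime where both checks fail is empty: $s\le r<\sqrt{3+r^2}-1$ is equivalent to $r<1$. No renormalization or other deformation segments are needed, and the discussion of $(1,s')$ can be dropped. This is exactly the paper's choice. It orders $|a_1|\le|a_2|\le|a_3|=1$ and takes $v=(a_1,a_2/|a_2|,a_3)$, bounding the constant term by $|a_1|^2+1+2|a_1|$ with $|a_1|\le|a_2|=b<1$.

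Three smaller points need tightening.

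\begin{itemize}
\item The strict inequality $F<|w|^2-r^2$ uses $r>0$. If $r=0$, then $s=0$ as well and $B(a)=\emptyset$, so that case is trivial.
\item Nonnegativity of the derivative only at $\rho=r$ suffices, but say why. $\partial_\rho F=2(\rho+\re(\overline{w}e^{i\theta_1}))$ is increasing in $\rho$, so it stays nonnegative on $[r,1]$, giving $F(1)\ge F(r)>3$. Do not rely on the sign of the derivative on the set $\{F_a>3\}$.
\item If you normalize phases and permute coordinates at the start, you must transport the resulting $v$ back by the same diagonal unitary and permutation, since the lemma concerns the given $a$. It is simpler to skip the normalization, because the argument works verbatim with the original phases.
\end{itemize}
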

\begin{proof}

Since $B(a) \subseteq B(ta)$ for any $t>1$, we may assume that $\|a\|_{\infty}=1$. Let us hence suppose that $a = (a_1, a_2, a_3)$, where $|a_1| \leq |a_2| \leq |a_3|=1$. If $a_2=0$, then also $a_1=0$ and thus $B(a) = \emptyset$, so there is nothing to prove. Therefore, let us suppose that $a_2 \neq 0$. We shall prove that the desired vector $v$ for which $B(a) \subseteq B(v)$ can be chosen as $v = \left (a_1, \frac{a_2}{|a_2|}, a_3 \right)$. This is equivalent to showing that for any unimodular $ x = (x_1, x_2, x_3) \in \mathbb{C}^3$ we have
$$|\langle a, x \rangle | > \sqrt{3} \implies |\langle v, x \rangle | > \sqrt{3}.$$
Let us write $x=(x_1, x_2, x_3)$ with $|x_i|=1$ for $i=1, 2 ,3$, $|a_2|=b \leq 1$ and $\frac{a_2}{b}=u$. For $b=1$ there is nothing to show, so we can assume that $b<1$. Let us now define a function $f: [b, 1] \to \mathbb{R}$ as
$$f(t) = \left |\left \langle (a_1, tu, a_3), (x_1, x_2, x_3) \right \rangle \right |^2$$
and let us suppose that $f(b)>3$. We calculate
$$f(t) = \left | a_1 \overline{x_1} + tu \overline{x_2} + a_3 \overline{x_3} \right |^2 = |a_1|^2 + |t|^2 + 1 +2\re \left ( t \overline{a_1}x_1u\overline{x_2} + t\overline{a_3}x_3u\overline{x_2} + a_1\overline{x_1}\overline{a_3}x_3 \right )$$
$$= t^2  + 2At + \left ( |a_1|^2 + 1 + 2\re(a_1\overline{x_1}\overline{a_3}x_3 ) \right ),$$
where $A =\re \left ( \overline{a_1}x_1u\overline{x_2} + \overline{a_3}x_3u\overline{x_2} \right )$. To prove that $f(1)>3$, it would be enough to show that the quadratic function $f$ is increasing on $[b, 1]$. Since $f'(t)=2(t+A)$, this is equivalent to $b + A \geq 0$. Suppose for contradiction that $b < - A$. Then, since $|a_1| \leq b$ and
$$|\re(a_1\overline{x_1}\overline{a_3}x_3 )| \leq |a_1\overline{x_1}\overline{a_3}x_3| = |a_1| \leq b<1$$
we have
$$f(b) = b^2 + 2Ab + \left ( |a_1|^2 + 1 + 2\re(a_1\overline{x_1}\overline{a_3}x_3 ) \right ) < b^2 - 2b^2 + b^2 + 1 + 2 = 3, $$
which contradicts the assumption. Thus, $f$ is non-decreasing on $[b, 1]$. It follows that $f(1)>f(b)>3$, as desired.
\end{proof}

In the next lemma we begin to relate toric bodies to the grid $\mathcal{G}$. To prove the impossibility of a covering of the torus, we have a certain important degree of freedom, as the grid can be arbitrarily translated on the torus, or equivalently, all three toric bodies can be simultaneously translated, while keeping $\mathcal{G}$ fixed. As we shall show, after a suitable translation any given toric body can be made to cover at most one point of $\mathcal{G}$.

\begin{lemma} \label{lem:grid_placement}
For any nonzero vector $a=(a_1, a_2, a_3) \in \mathbb{C}^3$ with $\|a\|_{\infty} \leq 1$, there exists a unimodular vector $x=(x_1, x_2, x_3) \in \mathbb{C}^3$, such that for $v=(a_1x_1, a_2x_2, a_3x_3) \in \mathbb{C}^3$ the toric body $B(v)$ contains at most one point of the grid $\mathcal{G}$.
\end{lemma}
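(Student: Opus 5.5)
The plan is to exploit three reductions (scaling toward the boundary, coordinate permutations, and phase rotations), after which a single explicit choice of $x$ can be checked directly on the nine grid points.

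\emph{Reductions.} First I reduce to a center $a$ with at most one coordinate of modulus less than $1$. By Lemma~\ref{lem:blob_expansion} there is $v'$ with $B(a)\subseteq B(v')$. Multiplying coordinates by a unimodular $x$ acts on toric bodies as a translation of $\mathbb{T}$, so the inclusion is preserved under the same $x$. It therefore suffices to prove the claim for centers of the form $(t,u_2,u_3)$ with $|u_2|=|u_3|=1$, up to the position of the small coordinate.

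Next I use the symmetry of $\mathcal{G}$ under permutation of coordinates. The grid points correspond to the vectors $g_{j,k}=(1,\omega^j,\omega^k)$. Swapping the first two coordinates of the center amounts to evaluating at $(e^{i\theta_1},1,e^{i\theta_2})$, which equals $e^{i\theta_1}(1,e^{-i\theta_1},e^{i(\theta_2-\theta_1)})$. This is the change of variables $(\theta_1,\theta_2)\mapsto(-\theta_1,\theta_2-\theta_1)$, a group automorphism of $\mathbb{T}$, and it maps the subgroup $\mathcal{G}$ onto itself. The other transpositions are similar. So I may assume the short coordinate is the first one. Finally, choosing the phase of $x_1$, I may take it to be a real number $t\in[0,1]$.

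\emph{The explicit choice.} I choose $x$ so that $v=(t,1,1)$. Here I am guessing that aligning the two unimodular coordinates is the right normalization; in the unimodular case this reproduces the DFT picture. In that picture only the center point is strictly inside and the others lie on the boundary or outside.

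\emph{Verification on the grid.} Up to conjugation, the value at the grid point $(j,k)$ is $|t+\omega^j+\omega^k|$, and there are two cases.
\begin{itemize}
\item If $j=k$, this equals $|t+2\omega^j|$. For $j\neq 0$ we have $|t+2\omega^j|^2=t^2+4-2t$, and $t^2-2t+4>3$ is equivalent to $(1-t)^2>0$. This is an inequality we need to fail, and it does fail as long as $t=1$.

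That is a problem: for $t<1$ these two diagonal points are covered. So the alignment choice must be corrected; the check shows exactly where it breaks.

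\item If $j\ne k$, then $\omega^j+\omega^k=-\omega^m$ for the third exponent $m$. So the value is $|t-\omega^m|^2=t^2+1-2t\cos(2\pi m/3)$, which is at most $t^2+t+1\le 3$, with equality only when $t=1$. These six points are therefore never strictly covered.
\end{itemize}

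\emph{Fixing the diagonal case, and the main obstacle.} Since the diagonal fails, I would instead rotate the pair $(u_2,u_3)$ by a common phase $e^{i\varphi}$. The three points with $j=k$ then sit at angles $\varphi+\{0,\pm 2\pi/3\}$ for $|t+2e^{i\psi}|^2=t^2+4+4t\cos\psi$. This exceeds $3$ exactly when $\cos\psi>-(1+t^2)/(4t)$, which is an arc of half-length $\arccos\bigl(-(1+t^2)/(4t)\bigr)$. For $t<1$ this half-length is less than $2\pi/3$, so the arc has length less than $4\pi/3$ and may contain two of the three equally spaced angles.

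Alternatively, I would use a different relative phase between $u_2$ and $u_3$. The real task is to choose the two free phases so that the nine quadratic conditions
\[
t^2+2+2t\cos\alpha+2t\cos\beta+2\cos(\alpha-\beta)>3
\]
hold at no more than one point of the shifted grid. This is the step I expect to be the main obstacle.

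My first attempt would be to pick the phases so that the grid lands symmetrically around the sublevel set $\{\cos\alpha+\cos\beta\le \dots\}$, treating small $t$ (strip-like bodies) and $t$ near $1$ (blob-like bodies) separately. For $t=0$ the choice $v=(0,1,e^{i\pi/3})$ is natural, since then only one of the three vertical lines has $|1+e^{i(\beta-\alpha)}|^2>3$ at a grid point. I would then interpolate the phase continuously in $t$ and verify each of the nine inequalities explicitly.
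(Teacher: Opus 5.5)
\textbf{There is a genuine gap: the proposal never chooses the phases for $0<t<1$.}

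Your reductions are sound and match the paper's setup. These are Lemma~\ref{lem:blob_expansion}, the fact that coordinatewise multiplication by a unimodular vector translates toric bodies (so $B(a)\subseteq B(v')$ survives the same $x$), the grid-preserving automorphisms of $\mathbb{T}$ to move the short coordinate to the first slot, and a phase to make it $t\in[0,1]$. But the whole content of the lemma is the choice of the two remaining phases when $0<t<1$, and that is where you stop. You correctly find that $(t,1,1)$ fails, and the rest only describes what would need to be done, so nothing is proved for $0<t<1$.

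Your analysis of the common rotation $(t,e^{i\varphi},e^{i\varphi})$ is also reversed. Since $(1+t^2)/(4t)\ge\tfrac12$ with equality only at $t=1$, the set where $\cos\psi>-(1+t^2)/(4t)$ is an arc of length strictly greater than $4\pi/3$ for $t<1$ (or the whole circle). It therefore always contains at least two of the three diagonal grid angles. Every center with equal phases on the last two coordinates covers at least two grid points, so a nonzero relative phase is forced.

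\textbf{The missing choice.} Keep the two unimodular coordinates \emph{conjugate}, $v=(t,u,\overline{u})$, and set their relative phase to exactly $2\arccos\tfrac t2-\tfrac{2\pi}{3}$, i.e.\ $u=\exp\bigl(i(\arccos\tfrac t2-\tfrac\pi3)\bigr)$. This interpolates between your two endpoint guesses. At $t=1$ it gives $(1,1,1)$. At $t=0$ the relative phase is $\pi/3$, which up to a mirror symmetry is your strip $(0,1,e^{i\pi/3})$; that strip in fact contains no grid point.

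\textbf{The verification.} At the grid point $(1,\omega^{-k},\omega^{l})$ the non-covering condition reads $t^2+2t\re\bigl(u(\omega^k+\omega^l)\bigr)+2\re\bigl(u^2\omega^{k+l}\bigr)\le 1$. The cases are:
\begin{itemize}
\item $\{k,l\}=\{1,2\}$ or $\{0,2\}$: the left side is identically $1$, so these four points lie exactly on the boundary for every $t$.
\item $\{k,l\}=\{0,1\}$: it equals $3t^2-2$.
\item $k=l=1$: it equals $\tfrac32 t\bigl(t-\sqrt{12-3t^2}\bigr)+1$.
\item $k=l=2$: it equals $-2$.
\end{itemize}
All of these are at most $1$, so only $(0,0)$ can be covered. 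Because four constraints are tight for all $t$ with this choice, interpolating the phase continuously and checking afterwards cannot replace the explicit formula.
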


\begin{proof}

By Lemma \ref{lem:blob_expansion} we can assume that at most one coordinate of $a$ has modulus strictly less than $1$. Without loss of generality, let $|a_1|=b \leq |a_2|=|a_3|=1$. We shall prove that the desired vector $v$ can be chosen as $v=(b, u, \overline{u})$, where $u=\exp{\left (i\left ( \arccos{\frac{b}{2}} - \frac{\pi}{3} \right ) \right )}$. In this case, we will show that $(0, 0)$ is the only point of $\mathcal{G}$ that could possibly belong to $B(v)$.

For fixed $k, l \in \{0, 1, 2\}$ we have
$$\left | \left \langle (b, u, \overline{u}), (1, \omega^k, \omega^l) \right \rangle \right |^2 = b^2 + 2 + 2 \re\left ( b u \omega^{-k} + b u \omega^{l} + u^2 \omega^{-k} \omega^{l} \right ).$$
Thus, by changing $-k$ to $k$, what we need to prove can be simply restated as
\begin{equation}
\label{desiredineq}
b^2 + 2b \re \left (u \left ( \omega^{k} + \omega^{l} \right ) \right ) + 2 \re \left (u^2\omega^k\omega^l \right ) \leq 1
\end{equation}
for $k, l \in \{0, 1, 2\}$ and $(k, l) \neq (0, 0)$. To this end, we shall consider some different cases.

\begin{enumerate}
\item $(k, l)=(1, 2)$ or $(k, l) = (2, 1)$. Then $\omega^k + \omega^l=-1$ and $\omega^k \omega^l = 1$, so the desired inequality \eqref{desiredineq} takes the form
$$b^2 - 2b\re(u) + 2\re(u^2) \leq 1.$$
By using the definition of $u$, the fact that $b \in [0, 1]$ and some standard trigonometric formulas, we obtain that
$$b^2 - 2b\re(u) + 2\re(u^2) = b^2 - 2b \cos \left ( \arccos{\frac{b}{2}} - \frac{\pi}{3} \right ) + 2 \cos \left (2\arccos{\frac{b}{2}} - \frac{2\pi}{3} \right ) $$
$$=b^2 - 2b \left( \frac{b}{4} + \frac{\sqrt{3}}{2} \sqrt{1 - \frac{b^2}{4}} \right) + 2 \left( -\frac{1}{2} \left( \frac{b^2}{2} - 1 \right) + \frac{\sqrt{3}}{2} \sqrt{b^2 - \frac{b^4}{4}} \right)$$
$$=b^2 - \frac{b^2}{2} - b\sqrt{3\left ( 1 - \frac{b^2}{4} \right )} + \left ( 1 -\frac{b^2}{2} \right ) + \sqrt{3\left ( b^2 - \frac{b^4}{4} \right )} = 1.$$
So, in this case, the expression turns out to be actually equal to $1$ for all $b \in [0, 1]$.
\item $(k, l) = (1, 0)$ or $(k, l) = (0, 1)$. Then $\omega ^k + \omega^l = 1 + \omega = -\omega^2$ and $\omega^k \omega^l = \omega$, so the inequality \eqref{desiredineq} takes the form
$$b^2  - 2b \re \left (u \omega^2 \right ) + 2 \re \left (u^2\omega \right ) \leq 1.$$
Reasoning similarly as in the previous case, we get
$$b^2  - 2b \cos \left (\arccos{\frac{b}{2}} + \pi \right ) + 2 \cos \left (2\arccos{\frac{b}{2}} \right ) = b^2  + 2b \left ( \frac{b}{2} \right ) + 2 \left ( \frac{b^2}{2} - 1 \right ) = 3b^2 - 2,$$
which is at most $1$, since $b \in [0, 1]$.
\item $(k, l) = (2, 0)$ or $(k, l) = (0, 2)$. Then $\omega ^k + \omega^l = 1 + \omega^2 = -\omega$ and $\omega^k \omega^l = \omega^2$, so the inequality \eqref{desiredineq} can be written as
$$b^2  - 2b \re \left (u \omega \right ) + 2 \re \left (u^2\omega^2 \right ) \leq 1.$$
Reasoning similarly as before, we get
$$b^2  - 2b \cos \left (\arccos{\frac{b}{2}} + \frac{\pi}{3} \right ) + 2 \cos \left (2\arccos{\frac{b}{2}}+\frac{2\pi}{3} \right )$$
$$=b^2 - 2b \left( \frac{b}{4} - \frac{\sqrt{3}}{2} \sqrt{1 - \frac{b^2}{4}} \right) + 2 \left( -\frac{1}{2} \left( \frac{b^2}{2} - 1 \right) - \frac{\sqrt{3}}{2} \sqrt{b^2 - \frac{b^4}{4}} \right)$$
$$=b^2 - \frac{b^2}{2} + b\sqrt{3\left ( 1 - \frac{b^2}{4} \right )} - \frac{b^2}{2} + 1 - b\sqrt{3\left ( 1 - \frac{b^2}{4} \right )} = 1,$$
so again the expression turns out to be constantly equal to $1$. 

\item $(k, l)=(1, 1)$. Then \eqref{desiredineq} rewrites as
$$b^2 + 4b\re(u\omega) + 2 \re(u^2 \omega^2) \leq 1.$$
Again, we calculate
$$b^2 + 4b\re(u\omega) + 2 \re(u^2 \omega^2) = b^2 + 4b \cos \left ( \arccos{\frac{b}{2}} + \frac{\pi}{3}  \right ) + 2 \cos \left ( 2\arccos{\frac{b}{2}} + \frac{2\pi}{3}  \right )$$
$$=b^2 + 4b \left ( \frac{b}{4} - \frac{\sqrt{3}}{2} \sqrt{1 - \frac{b^2}{4}} \right ) + 2 \left ( - \frac{1}{2}  \left ( \frac{b^2}{2} - 1 \right )  -  \frac{\sqrt{3}}{2} \sqrt{b^2 - \frac{b^4}{4}} \right )$$
$$= \frac{3}{2}b^2 - 3b\sqrt{3\left ( 1 - \frac{b^2}{4} \right )} + 1 = \frac{3}{2} b \left ( b - \sqrt{12-3b^2} \right ) + 1.$$
This is clearly at most $1$, since $b \leq 1 < 3 \leq \sqrt{12-3b^2}$.
\item $(k, l)=(2, 2)$. In this case, \eqref{desiredineq} is equivalent to
$$b^2 + 4b\re(u\omega^2) + 2 \re(u^2 \omega) \leq 1.$$
Repeating a similar calculation as in the previous cases leads to
$$b^2 + 4b\re(u\omega^2) + 2 \re(u^2 \omega) = b^2 + 4b\cos \left ( \arccos{\frac{b}{2}} + \pi  \right ) + 2 \cos \left ( 2\arccos{\frac{b}{2}} \right )$$
$$=b^2 + 4b \left ( -\frac{b}{2} \right ) + 2 \left ( \frac{b^2}{2}-1 \right )=-2,$$
so in this situation we end up with a function constantly equal to $-2$.

\end{enumerate}

We have analyzed all possible cases for the pairs $(k, l)$, and therefore, the proof is finished.
\end{proof}

We note that, with the notation from the previous lemma, the toric body $B(v)$ contains the grid point determined by $(k, l) = (1, 1)$ if and only if $b>0$. Hence, when $b=0$, this open toric body contains none of the grid points. The case of $b=0$ corresponds to a strip-like toric body with the grid points occurring only on the boundary of the strip (see Figure \ref{fig:long_blobs}).

Lemma \ref{lem:blob_expansion} allows us to assume that the toric bodies we consider have at most one coordinate of modulus less than $1$. To help with case analysis, we introduce the following classification.

\begin{definition}
    Let $a = (a_1, a_2, a_3) \in \mathbb{C}^3$. We say that the toric body $B(a)$ is of \emph{type $j$} for $j \in \{1, 2, 3\}$ if $|a_j| = \min \{|a_1|, |a_2|, |a_3 \}$.
\end{definition}

Note that unimodular toric bodies are of all three types simultaneously, while the others we consider will have a unique type (since we will always assume that at most one coordinate is of modulus less than $1$).

We have already noted that strip-like toric bodies can cover three-point segments oriented along each of the three cardinal directions: horizontal, vertical, and $x=y$. However, covering three points in each of these lines forces the body to be of a specific type. Specifically, a body covering the diagonal direction must be of type $1$, a body covering the horizontal direction must be of type $2$, and a body covering the vertical direction must be of type $3$. Due to the symmetries involved, it suffices to establish this fact for the horizontal direction alone, which we do in the following lemma.

\begin{lemma}
\label{lem:line}
Let $a=(a_1, a_2, a_3) \in \mathbb{C}^3$ be a vector such that $\|a\|_{\infty} \leq 1$ and
$$\left \{ \left (1, 1, 1 \right), \left (1, \omega, 1 \right), \left (1, \omega^2, 1 \right) \right \} \subseteq B(a).$$
Then $|a_2|<1$.
\end{lemma}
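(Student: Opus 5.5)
The plan is to sum the three strict inequalities and use orthogonality of the roots of unity. Membership of the point $(1,\omega^k,1)$ in $B(a)$ means
$$\left|a_1 + a_2\overline{\omega^k} + a_3\right| > \sqrt{3}, \qquad k=0,1,2.$$
Put $s = a_1 + a_3$. Then the quantities in question are $|s + a_2\omega^{-k}|$ for $k=0,1,2$. Squaring and summing over $k$ gives
$$\sum_{k=0}^{2} \left|s + a_2\omega^{-k}\right|^2 = 3|s|^2 + 3|a_2|^2 + 2\re\Big(s\,\overline{a_2}\sum_{k}\omega^{k}\Big) = 3|s|^2 + 3|a_2|^2,$$
because $1+\omega+\omega^2=0$. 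The hypothesis therefore yields $3|s|^2 + 3|a_2|^2 > 9$, that is, $|s|^2 + |a_2|^2 > 3$.

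This sum alone is not enough, since $|s|\le 2$ allows $|s|^2+|a_2|^2$ to reach $5$. I will use the maximum of the three terms instead. Suppose for contradiction that $|a_2| = 1$. The three points $s + a_2\omega^{-k}$ are the vertices of an equilateral triangle inscribed in the circle of radius $1$ centered at $s$. All three vertices are required to lie strictly outside the disc of radius $\sqrt{3}$ centered at $0$. A circle of radius $1$ can be covered only partially by that disc, so I need to quantify how much.

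The key step is to fix $r = |s| \in [0,2]$ and find the minimum over $k$ of $|s + a_2\omega^{-k}|^2 = r^2 + 1 + 2r\cos\phi_k$. Here the angles $\phi_k$ are equally spaced by $2\pi/3$, so one of them satisfies $\cos\phi_k \le -\tfrac12$. For that index,
$$\left|s + a_2\omega^{-k}\right|^2 \le r^2 - r + 1.$$
On $[0,2]$ this quantity is at most $3$, with equality only at $r=2$. When $r<2$ we get a contradiction with the strict inequality $>3$ immediately.

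The remaining case $r = 2$ is the main point to handle. It forces $|a_1|=|a_3|=1$ and $a_1=a_3$. Equality $r^2-r+1=3$ would also require $\cos\phi_k = -\tfrac12$ exactly for that index. Even then the bound gives $|\cdot|^2 \le 3$, which already contradicts the strict inequality $>3$.

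So in every case some $k$ has $|\langle a,(1,\omega^k,1)\rangle| \le \sqrt3$, contradicting the hypothesis. Hence $|a_2|<1$. Since $\|a\|_\infty \le 1$ and the case $|a_2| = 1$ is excluded, this is the claim; no separate argument for $|a_2|>1$ is needed.

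The only delicate point is keeping the inequalities strict, and the bound $r^2 - r + 1 \le 3$ on $[0,2]$ takes care of this uniformly.
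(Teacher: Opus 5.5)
Your proof is correct and is essentially the paper's argument: assume $|a_2|=1$, choose the $k$ for which $\re(s\overline{a_2}\omega^k)\le -|s|/2$, and conclude $3<|s|^2-|s|+1\le 3$ using $|s|=|a_1+a_3|\le 2$. The opening summation over $k$ and the separate treatment of $r=2$ are unnecessary, because the bound $r^2-r+1\le 3$ holds on all of $[0,2]$ and already contradicts the strict inequality.
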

\begin{proof}

Suppose, for the sake of contradiction, that $|a_2| = 1$. By assumption, for every $k \in \{0, 1, 2\}$ we have
$$3 < \left | \left\langle a, \left (1, \omega^k, 1 \right ) \right \rangle  \right |^2=\left | a_1 + a_2 \omega^{-k} + a_3  \right |^2 = \left | z + a_2 \omega^{-k} \right |^2 = |z|^2 + 2\re(z \overline{a_2}\omega^k)+ 1,$$
where $z = a_1+a_3$. Since every point on the complex unit circle can be rotated by either $0^{\circ}$, $120^{\circ}$ or $240^{\circ}$ such that its real part will be at most $-\frac{1}{2}$, there exists some $k \in \{0, 1, 2\}$, for which we have 
$$\re(z \overline{a_2}\omega^k) \leq - \frac{1}{2} |z|\cdot |a_2| = - \frac{|z|}{2}.$$
For this $k$ it follows that
$$3 < |z|^2 + 2\re(z \overline{a_2}\omega^k)+ 1 < |z|^2 - |z| + 1.$$
However, by the triangle inequality we have $|z|=|a_1+a_3| \leq 2$ and for any $t \in [0, 2]$ we have $t^2 - t \leq 2$. Thus, the assumption $|a_2| = 1$ leads to a contradiction, so $|a_2| < 1$.
\end{proof}

Before proving Conjecture \ref{conj1} for $n=3$ we shall also need the following technical, trigonometric lemma. 

\begin{lemma}
\label{lem:trig2}
 Let $b \in [0, 1]$ and $r \in [-1, 1] \setminus \left ( -\frac{1}{2}, \frac{1}{2} \right ).$ Then there does not exist an angle $\gamma \in [0, 2\pi)$ such that $x = \cos \gamma$ and $y= \cos(\gamma - 120^{\circ})$ are of opposite signs and satisfy both of the inequalities
$$4x^2 + 4brx + b^2 > 3 \quad \text{ and } \quad 4y^2 + 4bry + b^2 > 3.$$
\end{lemma}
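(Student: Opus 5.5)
The plan is to turn the two quadratic inequalities into lower bounds on $|x|$ and $|y|$, and then contradict the algebraic relation that ties two cosines of angles $120^{\circ}$ apart.

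\textbf{Step 1: reductions.} First I remove the sign of $r$. Replacing $\gamma$ by $\gamma+180^{\circ}$ changes $(x,y)$ to $(-x,-y)$. This preserves the opposite-sign condition and turns the two inequalities into the same inequalities with $r$ replaced by $-r$. So I may assume $\frac12\le r\le 1$; put $c=br\in[0,1]$. The statement is symmetric in $x$ and $y$, both in the inequalities and in the identity used below. So I only treat $x>0>y$; the case $x<0<y$ is identical with the roles swapped. (If ``opposite signs'' allows a zero, that case is excluded directly: $x=0$ gives $b^2>3$, impossible.)

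\textbf{Step 2: roots of the quadratic.} Write
$$4t^2+4ct+b^2-3=(2t+c)^2-s^2, \qquad s=\sqrt{3-b^2+c^2}=\sqrt{3-b^2(1-r^2)}.$$
Here $s\ge\frac32$ because $b^2(1-r^2)\le\frac34$. The inequality $4t^2+4ct+b^2>3$ means $t>P:=\frac{s-c}{2}$ or $t<-Q$ with $Q:=\frac{s+c}{2}$. Both $P$ and $Q$ are positive.

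\textbf{Step 3: the constraint.} Set $p=x>0$ and $q=-y>0$. Step 2 gives $p>P$ and $q>Q$. Since $\cos\gamma$, $\cos(\gamma-120^{\circ})$, $\cos(\gamma+120^{\circ})$ sum to $0$ and their squares sum to $\frac32$, we get $x^2+xy+y^2=\frac34$. In the new variables this reads
$$g(p,q):=p^2-pq+q^2=\tfrac34 .$$

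\textbf{Step 4: lower bound for $g$ on the region (main obstacle).} I want $g(p,q)>\frac34$ whenever $p>P$ and $q>Q$. The form $g$ is convex with its minimum at the origin, which lies outside the region. So its infimum over the closed region $\{p\ge P,\ q\ge Q\}$ is attained on the boundary.

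On the edge $p=P$, the function $q\mapsto q^2-Pq$ has its vertex at $q=P/2\le Q$, since $c\ge0$. So the minimum on this edge is at the corner, where
$$g(P,Q)=\frac{s^2+3c^2}{4}=\frac{3-b^2+4b^2r^2}{4}\ \ge\ \frac34,$$
using $r^2\ge\frac14$.

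On the edge $q=Q$, the minimum is at $p=\max(P,Q/2)$. If it is $P$, we are back at the corner. Otherwise the minimum is $\frac34 Q^2$, so I need $Q\ge1$, i.e. $s+br\ge2$. Since $r\ge\frac12$ this reduces to $\sqrt{3-b^2(1-r^2)}+br\ge 2$ on $[0,1]\times[\frac12,1]$. This is the one genuinely computational check I expect to need care. It should follow by monotonicity in $r$: the left side increases in $r$, so it suffices to take $r=\frac12$, where $\sqrt{3-\frac34b^2}\ge 2-\frac b2$ reduces to $b(b-2)\le0$.

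\textbf{Step 5: conclusion.} The inequalities $p>P$ and $q>Q$ are strict, and $g$ strictly increases as one moves into the interior away from the minimizing boundary point. Hence $g(p,q)>\frac34$, contradicting Step 3. This proves the lemma.
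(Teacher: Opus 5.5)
Your overall strategy works, but Step 4 contains a false claim. On the edge $q=Q$ you assert that $\sqrt{3-b^2(1-r^2)}+br\ge 2$ holds on all of $[0,1]\times[\frac12,1]$. This fails: at $b=0$ the left side is $\sqrt3<2$, and at $b=r=\frac12$ it is about $1.93$. The error is in the algebra at $r=\frac12$. Squaring $\sqrt{3-\frac34b^2}\ge 2-\frac b2$ gives $3-\frac34 b^2\ge 4-2b+\frac14 b^2$, i.e.\ $(1-b)^2\le 0$, not $b(b-2)\le 0$ (the constant $3-4=-1$ was dropped). So $Q\ge 1$ is false in general, and as written the edge $q=Q$ is not controlled.

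The repair is short, because you only need $Q\ge1$ in the subcase where the edge minimum is off the corner, i.e.\ $P<Q/2$. That condition reads $2(s-c)<s+c$, i.e.\ $s<3c$. Then $Q=\frac{s+c}{2}>\frac{2s}{3}\ge 1$, since $s\ge\frac32$. Equivalently, $Q<q=-y\le 1$ forces $Q<1$, which in turn gives $P\ge Q/2$, so the corner is the minimizer. This is the counterpart of the paper's step deriving $A\ge\frac12$ from $B\ge-1$.

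With this fix, your argument is a valid and somewhat cleaner alternative to the paper's. The paper converts $x>A$, $y<B$ into arcs for $\gamma$ and compares $\arccos B$ with $\arccos A+\frac{2\pi}{3}$, whereas you minimize the convex form $p^2-pq+q^2$ over a quadrant. The decisive computation is the same in both: with $A=P$ and $B=-Q$, your corner value $4g(P,Q)=3+b^2(4r^2-1)$ is exactly the paper's $4(A^2+AB+B^2)$.
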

\begin{proof}
Suppose, for the sake of contradiction, that such an angle $\gamma$ exists. Note that if $(x, y)$ works for $r$, then $(-x, -y)$ works for $-r$ (since $-x = \cos(\gamma + 180^\circ)$ and $-y = \cos(\gamma - 120^\circ + 180^\circ)$). Thus, without loss of generality, we may assume $r \geq \frac{1}{2}$. Moreover, we may also suppose that $x \geq 0 \geq y$.

Solving the quadratic inequalities for $x$ and $y$ gives $ x > A$ and  $y <  B$, where
$$A=\frac{-br+\sqrt{(r^2-1)b^2+3}}{2} \quad \text{ and } \quad  B = \frac{-br-\sqrt{(r^2-1)b^2+3}}{2}. $$
Since $y \geq -1$, we must also have $B \geq -1$, which is equivalent to
\begin{equation}
\label{ineqr}
br \leq \frac{b^2+1}{4}.
\end{equation}
On the other hand, the condition $A \geq \frac{1}{2}$ is easily checked to be equivalent to
$$br \leq \frac{2-b^2}{2}.$$
However, this follows from \eqref{ineqr}, as for $b \in [0, 1]$ we have $\frac{b^2+1}{4} \leq \frac{2-b^2}{2}$. Hence, we conclude that $A \geq \frac{1}{2}$.

Since $x = \cos \gamma$ and $y = \cos (\gamma - 120^{\circ})$ the inequalities $x > A$, $y < B$ can be equivalently restated as
$$-\arccos A < \gamma < \arccos A$$
and
$$\gamma - \frac{2\pi}{3} \not \in \left [ -\arccos B, \arccos B \right ].$$
Since $A > B$ and $\arccos$ is decreasing, we have $\arccos A < \arccos B$. To derive a contradiction with the above conditions on $\gamma$, it suffices to show 
$$- \arccos A - \frac{2 \pi}{3} \geq -\arccos B,$$
or equivalently,
$$\arccos B \geq \arccos A + \frac{2\pi}{3}.$$
We note that both sides of this inequality lie in $[0, \pi]$: the left by the definition of $\arccos$ and the right because $A \geq \frac{1}{2}$ implies $\arccos A \leq \frac{\pi}{3}$, so $\arccos A + \frac{2\pi}{3} \leq \pi$. Since $\cos$ is decreasing on $[0, \pi]$, the desired inequality is therefore equivalent to 
$$B = \cos (\arccos B) \leq \cos \left ( \arccos A + \frac{2\pi}{3} \right ) = -\frac{A}{2} - \frac{\sqrt{3}}{2} \sqrt{1-A^2},$$
which can be rewritten as
$$\sqrt{3(1-A^2)} \leq -A-2B.$$
Clearly, both sides are non-negative since $A+B = -br \leq 0$ and $B \leq 0$. Squaring yields
$$3-3A^2 \leq A^2 + 4AB + 4B^2,$$
or just
$$3 \leq 4(A^2+AB + B^2).$$
We calculate
$$4(A^2+AB + B^2) =  2 \left ( b^2r^2 + (r^2-1)b^2 + 3 \right ) + \left (b^2r^2 - ((r^2-1)b^2 + 3) \right )$$
$$=4b^2r^2 - b^2 + 3 = b^2(4r^2-1) + 3,$$
which is indeed at least $3$, as we have $4r^2-1 \geq 0$ by the assumption. This concludes the proof.

\end{proof}

We are ready to move to the main part of the proof.

\begin{proof}[Proof of Conjecture \ref{conj1} for $n=3$]

Let us assume that there exist vectors $a_1, a_2, a_3 \in \mathbb{C}^3$ satisfying $\|a_i\|_{\infty} \leq 1$ for $i=1, 2, 3$ and such that for any unimodular vector $x \in \mathbb{C}^3$ the inequality $| \langle a_i, x \rangle | > \sqrt{3}$ holds for at least one $i \in \{1, 2, 3\}$. As noted earlier, this is equivalent to $\mathbb{T} \subseteq B(a_1) \cup B(a_2) \cup B(a_3)$. We note that this condition is rotation-invariant, i.e., everything is unchanged, when the $j$-th coordinate of each vector $a_i$ is multiplied by the same number (where $j \in \{1, 2, 3\}$). Thus, by Lemma \ref{lem:grid_placement}, we may assume that the toric body $B(a_3)$ covers at most one point of $\mathcal{G}$. Our goal is therefore to prove that the union $B(a_1) \cup B(a_2)$ cannot cover all of the remaining $8$ points of $\mathcal{G}$. To this end, we shall fully characterize all $4$-element subsets of $\mathcal{G}$ that are coverable by a single toric body.

Let $S \subseteq \mathcal{G}$ be a subset of grid points. We consider the following transformations:
\begin{enumerate}
    \item Replace every $(\theta_1, \theta_2) \in S$ with $\left (\theta_1 + \frac{2\pi}{3}, \theta_2 \right )$.
    \item Replace every $(\theta_1, \theta_2) \in S$ with $\left ( \theta_1, \theta_2+\frac{2\pi}{3} \right )$.
    \item Replace every $(\theta_1, \theta_2) \in S$ with $(\theta_2, \theta_1)$.
    \item Replace every $(\theta_1, \theta_2) \in S$ with $(-\theta_1, -\theta_2)$.
    \item Replace every $(\theta_1, \theta_2) \in S$ with $(-\theta_1, \theta_2-\theta_1)$.
    \item Replace every $(\theta_1, \theta_2) \in S$ with $(\theta_1-\theta_2, -\theta_2)$.
\end{enumerate}
We observe that if a subset $S' \subseteq \mathcal{G}$ is obtained from $S \subseteq \mathcal{G}$ by a finite sequence of these transformations, then $S'$ is coverable by a single toric body if and only if $S$ is. Indeed, transformations $1$ and $2$ correspond to multiplying the first or second coordinate of the center of a toric body by $e^{i \frac{2\pi}{3}}$. Transformation $3$ swaps the second and third coordinates and transformation $4$ corresponds to taking the complex conjugate of all coordinates of the center. For transformation $5$ we have
\begin{align*}
\left | \left  \langle (a_1, a_2, a_3), \left (1, e^{i\theta_1}, e^{i\theta_2} \right)  \right \rangle \right | &= \left | \left  \langle (a_1, a_2, a_3), \left (e^{-i \theta_1}, 1, e^{i(\theta_2-\theta_1)} \right)  \right \rangle \right | \\
&=\left | \left  \langle (a_2, a_1, a_3), \left (1, e^{-i \theta_1}, e^{i(\theta_2-\theta_1)} \right)  \right \rangle \right |,
\end{align*}
which again corresponds to swapping the coordinates of the center. A similar argument applies to transformation $6$, and hence, all transformations preserve coverability by a single toric body.

It is clear that the relation of \emph{$S'$ being reachable from $S$} via the six transformations listed above defines an equivalence relation on the set of all $4$-element subsets of $\mathcal{G}$. In Figures~\ref{fig:blob_class1}--\ref{fig:blob_class4} we list all $\binom{9}{4}=126$ four-element subsets of $\mathcal{G}$, grouped into the four equivalence classes induced by this relation.

We claim that only the subsets in Class 1 can be covered by a single toric body. Establishing this suffices to prove the conjecture. Indeed, observe that any subset of $\mathcal{G}$ of cardinality $5$ contains a $4$-element subset belonging to Class 2, Class 3, or Class 4. For instance, consider the first 4-element subset present in Figure \ref{fig:blob_class1} (square in the lower left corner). It can be checked by hand that extending the subset by any single point in the grid (there are 5 choices) will result in the new 5-element subset containing some $4$-element subset of Class 2, 3, or 4. Therefore, this specific Class 1 $4$-element subset cannot be extended to a coverable $5$-element subset. By symmetry, this implies that no single toric body can cover $5$ points of $\mathcal{G}$. 

We now show how this observation can be used to complete the proof. By Lemma \ref{lem:grid_placement}, we may assume that the toric body $B(a_3)$ covers at most one point of $\mathcal{G}$. Therefore, if entire $\mathcal{G}$ is covered by three toric bodies, the union $B(a_1) \cup B(a_2)$ would have to cover at least $8$ points. Since neither body can cover $5$ points, each of them has to cover exactly $4$ points (forcing them to be Class 1), and these two sets of points have to be disjoint. However, it is simple to check by hand that any two subsets from Class 1 share at least one common point. Thus, it is impossible to cover the remaining $8$ points with two toric bodies.

\begin{figure}[h!] 
    \centering
    
    \begin{subfigure}[b]{0.49\linewidth}
        \centering
        \includegraphics[width=\linewidth]{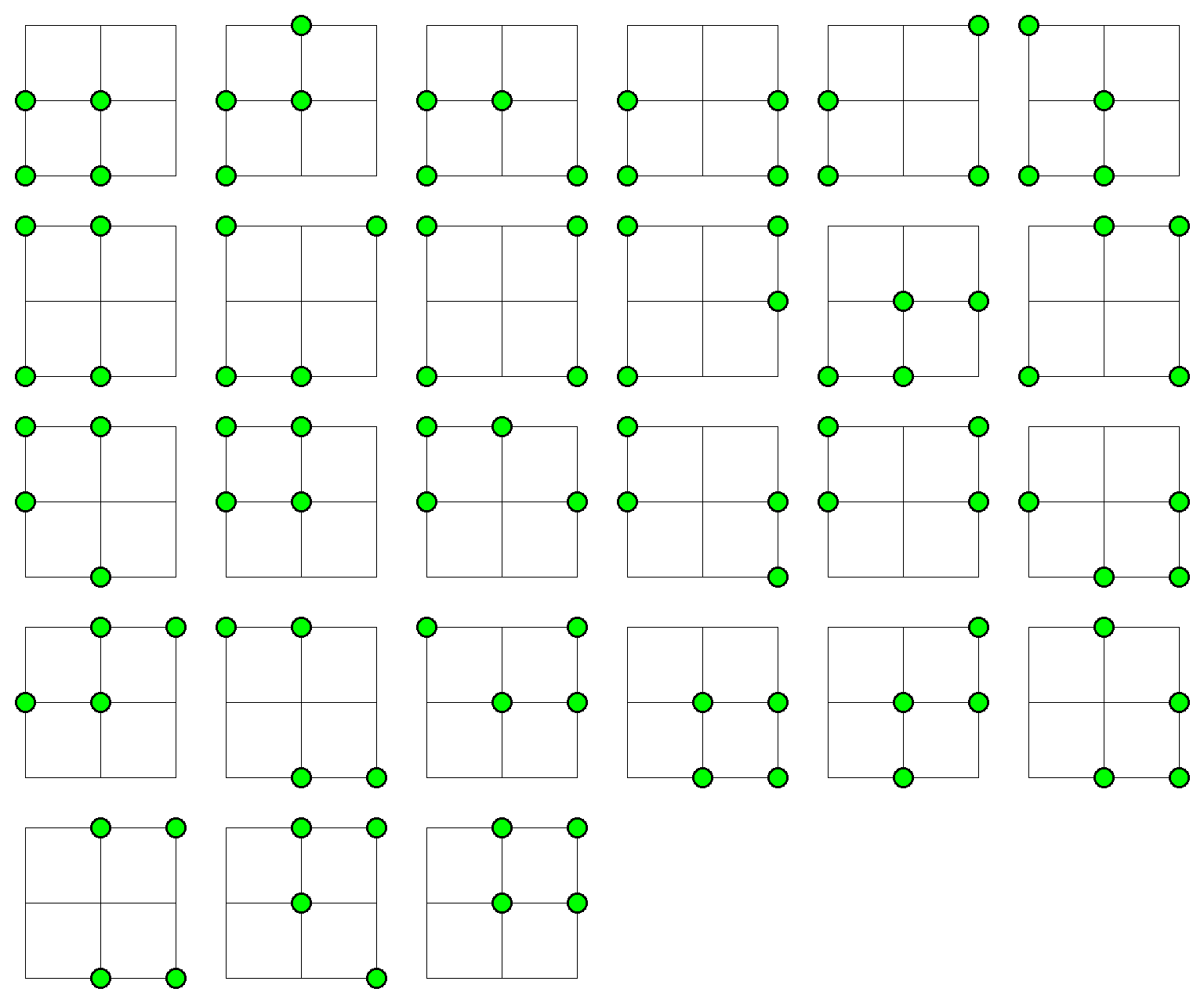}
        \caption{\textbf{Class 1}.}
        \label{fig:blob_class1}
    \end{subfigure}
    \hfill
    \begin{subfigure}[b]{0.49\linewidth}
        \centering
        \includegraphics[width=\linewidth]{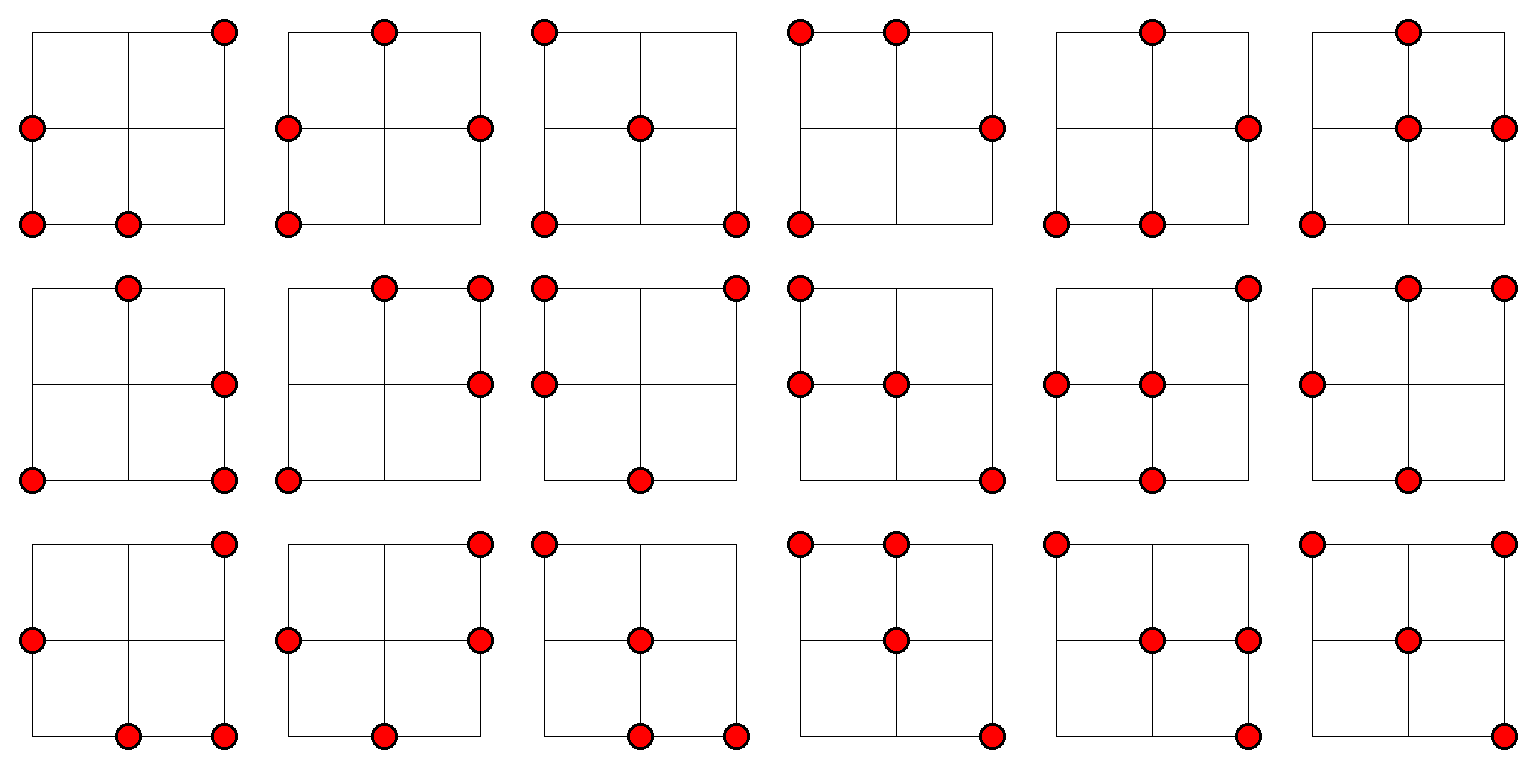}
        \caption{\textbf{Class 2}.}
        \label{fig:blob_class2}
    \end{subfigure}
    
    \vspace{4mm}

    \begin{subfigure}[b]{0.49\linewidth}
        \centering
        \includegraphics[width=\linewidth]{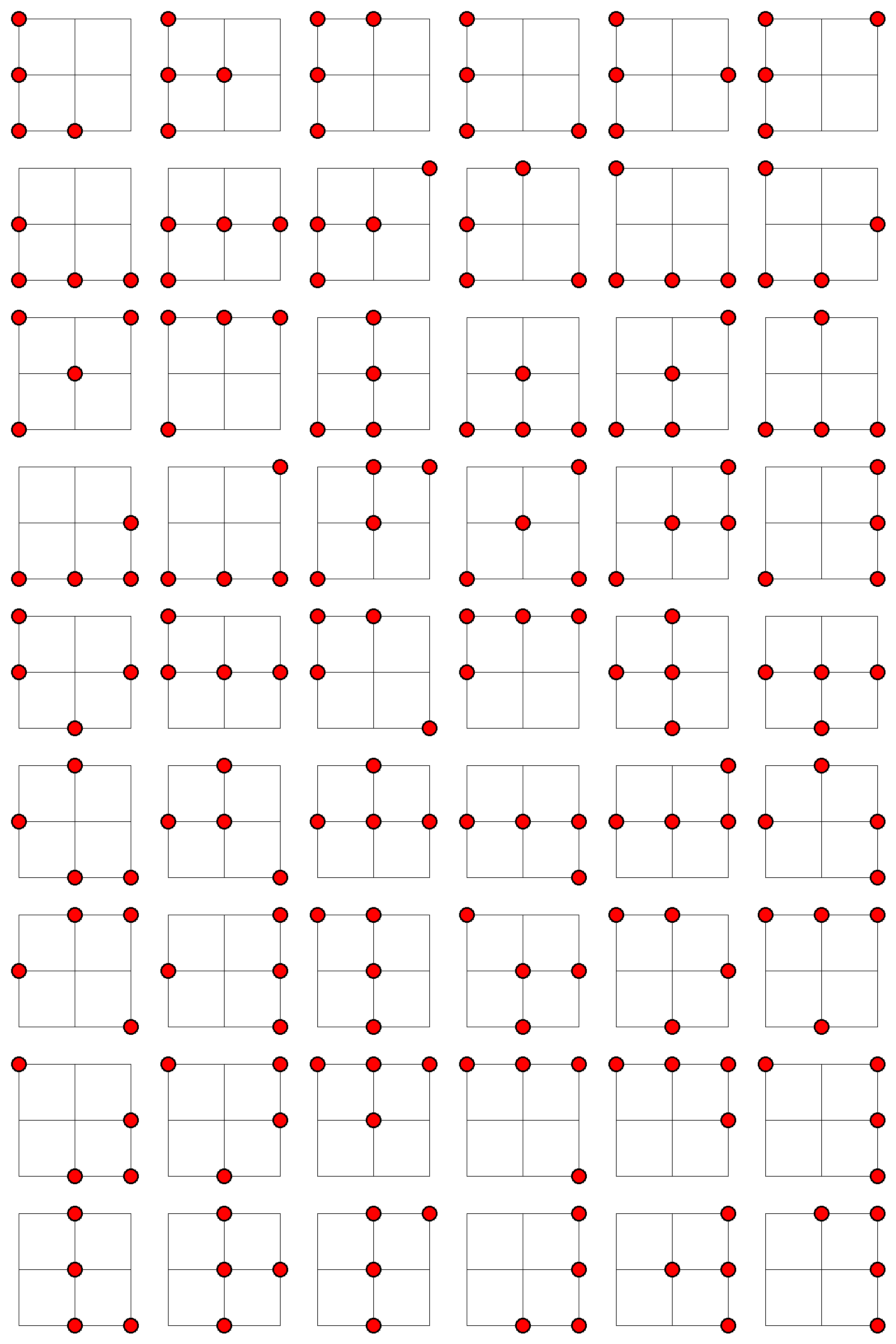}
        \caption{\textbf{Class 3}.}
        \label{fig:blob_class3}
    \end{subfigure}
    \hfill
    \begin{subfigure}[b]{0.49\linewidth}
        \centering
        \includegraphics[width=\linewidth]{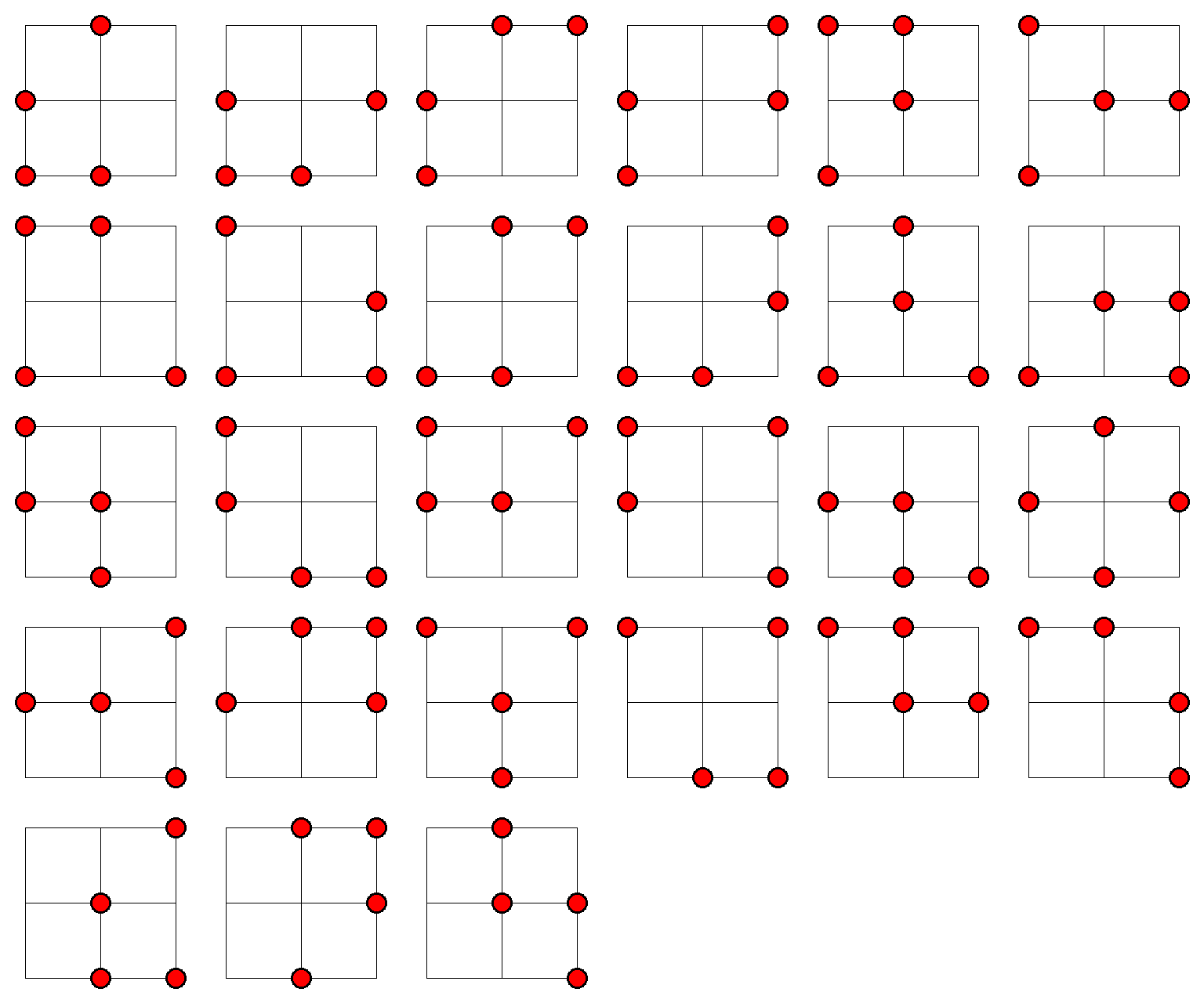}
        \caption{\textbf{Class 4}.}
        \label{fig:blob_class4}
    \end{subfigure}

\end{figure}

The easiest class to rule out is the second one, where every subset contains three points on a line parallel to the diagonal $x=-y$. It turns out that already such a triple cannot be covered by a single toric body. Indeed, such a triple corresponds to an orthogonal basis of $\mathbb{C}^3$ consisting of unimodular vectors $x_1, x_2, x_3$ (for example the row vectors of the $3 \times 3$ DFT matrix). For such $x_1, x_2, x_3 \in \mathbb{C}^3$, the Parseval identity implies that any vector $a \in \mathbb{C}^3$ with $\|a\|_{\infty} \leq 1$ satisfies
$$|\langle a, x_1 \rangle| ^2 + |\langle a, x_2 \rangle| ^2 + |\langle a, x_3 \rangle|^2 = 3 \|a\|_2^2 \leq 9,$$
so that necessarily $|\langle a, x_i \rangle| \leq \sqrt{3}$ for some $i \in \{1, 2, 3\}$, i.e., the toric body $B(a)$ does not contain $x_i$.

The remaining two classes require significantly more effort and it will be necessary to use all four points in a subset. We begin with class $3$, characterized by having three points lying on a single line in one of the cardinal directions (horizontal, vertical, or $x=y$). Since it is enough to analyze only a single representative of Class~$3$, let us take a subset containing three points in a horizontal line, i.e., for example, the points of the form $(1, \omega^k, 1)$ for $k \in \{0, 1, 2\}$ and the point $(1, \omega, \omega)$ (see Figure \ref{fig:subset3}).

\begin{figure}[h] 
    \caption{A considered subset of Class~$3$.}\label{fig:subset3}
    \centering
    \includegraphics[width=.2\linewidth]{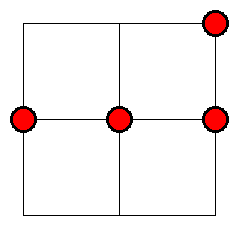}
\end{figure}

Suppose there exists a vector $a \in \mathbb{C}^3$ with $\|a\|_{\infty} \leq 1$ such that the toric body $B(a)$ contains all of those $4$ points. By Lemma \ref{lem:blob_expansion} we can suppose that at most one coordinate of $a$ is of modulus strictly less than $1$. Lemma \ref{lem:line} further implies that this must be the second coordinate, i.e. the toric body $B(a)$ is of type $2$. Hence $a = (e^{i\alpha}, b, e^{i\beta})$ for some $\alpha, \beta \in [0, 2\pi]$ and $b \in [0, 1)$. For $k \in \{0, 1, 2\}$ we have
$$\left | \left \langle (e^{i\alpha}, b, e^{i\beta}), (1, \omega^k, 1) \right \rangle \right |^2 = 2 + b^2 + 2b \re \left ( e^{i\alpha}\omega^k + e^{-i \beta} \omega^{-k} \right ) + 2\re \left ( e^{i(\alpha-\beta)} \right )$$
$$ = 2 + b^2 + 2b \cos \left ( \alpha + \frac{2k\pi}{3} \right ) + 2b \cos \left ( -\beta -  \frac{2k\pi}{3} \right ) +  2 \cos \left ( \alpha - \beta \right ).$$
After changing $\beta$ to $-\beta$ for simplicity, we arrive at the following conditions which need to be satisfied for $k \in \{0, 1, 2\}$:
$$b \left ( \cos \left ( \alpha + \frac{2k\pi}{3} \right ) + \cos \left ( \beta -  \frac{2k\pi}{3} \right ) \right ) + \cos(\alpha + \beta) > \frac{1-b^2}{2}.$$
The condition for the fourth point $(1, \omega, \omega)$ reads similarly as
$$b\left (\cos \left ( \alpha + \frac{2\pi}{3} \right ) + \cos \beta \right ) + \cos \left (\alpha + \beta + \frac{2\pi}{3} \right ) > \frac{1-b^2}{2}.$$
We aim to derive a contradiction from these conditions. To this end, we first rewrite them as
\begin{enumerate}
\item $-2b \cos \gamma \cos \omega  + \cos(2 \gamma) > \frac{1-b^2}{2}$,
\item $2b \cos \gamma \cos (\omega - 60^{\circ}) + \cos(2 \gamma) > \frac{1-b^2}{2}$,
\item $2b \cos \gamma \cos (\omega + 60^{\circ}) + \cos(2 \gamma) > \frac{1-b^2}{2}$,
\item $2b \cos (\gamma-120^{\circ}) \cos (\omega + 60^{\circ}) + \cos(2 \gamma+120^{\circ}) > \frac{1-b^2}{2}$,
\end{enumerate}
where $\gamma = \frac{\alpha+\beta}{2} + \pi$ and $\omega = \frac{\alpha - \beta}{2}$. Then, using $\cos(2 \gamma) = 2\cos^2 \gamma - 1$, we can further rewrite everything as
\begin{enumerate}
\item $4x^2 - 4btx + b^2 > 3$,
\item $4x^2 + 4bzx + b^2 > 3$,
\item $4x^2 + 4bwx + b^2 > 3$,
\item $4y^2 + 4bwy + b^2 > 3$,
\end{enumerate}
where $x = \cos \gamma$, $y = \cos(\gamma - 120^{\circ})$, $t=\cos \omega$, $z=\cos (\omega-60^{\circ})$ and $w = \cos (\omega + 60^{\circ})$. After adding $\pi$ to all of the angles, the variables will change their signs, while the inequalities will stay the same. Hence, without loss of generality, we can suppose that $x \geq 0$. Clearly, for any angle $\omega$ we have
$$\min\{z, -t, w\} = \min \left \{\cos (\omega-60^{\circ}), \cos \left (\omega + 180^{\circ} \right ), \cos (\omega+60^{\circ}) \right \} \leq -\frac{1}{2}.$$
In particular, taking into account the inequalities $1-3$ we see that $x$ has to satisfy the quadratic inequality
\begin{equation}
\label{quadineqx}
4x^2 - 2bx + b^2 - 3 > 0,
\end{equation}
which gives us
\begin{equation}
\label{ineqx}
x > \frac{b+\sqrt{3(4-b^2)}}{4},
\end{equation}
as $x \geq 0$. Now, we shall consider two cases: if $y \geq 0$ or $y<0$.

Let us first assume that $y \geq 0$. Since $w \leq 1$, from the inequality $4$ it follows that $y$ satisfies
$$4y^2 + 4by + b^2 - 3 > 0,$$
which yields $y > \frac{-b+\sqrt{3}}{2}$. From the inequality \eqref{ineqx} if follows easily that $x > \frac{b+3}{4}$, as $b \leq 1$. Moreover, since $x$ and $y$ are cosines of an angles differing by $120^{\circ}$, it is easy to verify that $x^2 + xy + y^2 = \frac{3}{4}$. Combining all these facts together leads to the estimate
$$\frac{3}{4} = x^2 + xy + y^2 \geq \frac{1}{16} \left ((b+3)^2  + 2(b+3)\left ( -b+\sqrt{3} \right )  +  4\left ( -b+\sqrt{3} \right )^2 \right ),$$
that can be rewritten as
$$0 \geq 3 \left (b^2 - 2b\sqrt{3} + 3  + 2\sqrt{3} \right )  = 3 \left ( \left ( b - \sqrt{3} \right )^2 + 2\sqrt{3} \right),$$
but this is obviously not true, so we obtain a contradiction.

Now, let us suppose that $y<0$. Let us first assume also that $w \geq -\frac{1}{2}$. Then $y$ satisfies
$$4y^2 - 2by + b^2 - 3 > 0,$$
which combined with \eqref{quadineqx} gives us a contradiction with Lemma \ref{lem:trig2} for $r:= - \frac{1}{2}$. In the case $w < - \frac{1}{2}$ we get a contradiction with Lemma 
\ref{lem:trig2} for $r:=w$ by considering the quadratic inequalities $3$ and $4$, satisfied by $x$ and $y$ respectively. Thus, the case of Class~$3$ is concluded.

We move to the case of Class~$4$. Subsets in this class are characterized by containing: a segment in each of the three cardinal directions (horizontal, vertical, $x=y$), two segments in direction of the diagonal $x=-y$ and an additional segment in one of the cardinal directions. Since we are free to choose any subset of this class, let us pick one which has two diagonal segments, so that the situation is symmetric for horizontal and vertical directions. An example of such subset is $\{(1, \omega, 1), (1, \omega^2, 1), (1, 1, \omega), (1, 1, \omega^2)\}$ (shown in Figure  \ref{fig:subset4}).

\begin{figure}[h] 
    \caption{A considered subset of Class~$4$.}\label{fig:subset4}
    \centering
    \includegraphics[width=.2\linewidth]{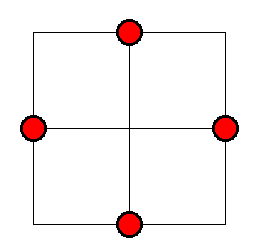}
\end{figure}

Let us first suppose that we are dealing with a toric body of type $2$, i.e. of the form $(e^{i \alpha}, b, e^{i \beta})$. Repeating similar calculations as before, and changing $\beta$ to $-\beta$ in the end, we arrive at the inequalities of the form

$$b \left ( \cos \left ( \alpha + \frac{2k\pi}{3} \right ) + \cos \left ( \beta -  \frac{2k\pi}{3} \right ) \right ) + \cos(\alpha + \beta) > \frac{1-b^2}{2},  $$
and
$$b \left ( \cos \alpha + \cos \left ( \beta +  \frac{2k\pi}{3} \right ) \right ) + \cos \left (\alpha + \beta + \frac{2k\pi}{3} \right ) > \frac{1-b^2}{2},$$
that need to be satisfied for $k \in \{1, 2\}$. We note that if we would consider a toric body of type $3$, i.e. of the form $(e^{i \alpha}, e^{i \beta}, b)$, then we would end up with the exact same set of inequalities, and therefore, it is enough to consider only type $2$. However, toric bodies of type $1$ will require a different consideration, which will be done later. We first rewrite the inequalities as

\begin{enumerate}
\item $2b \cos \gamma \cos \left (\omega + 120^{\circ} \right )+ \cos(2 \gamma) > \frac{1-b^2}{2}$,
\item $2b \cos \gamma \cos \left ( \omega + 240^{\circ} \right ) + \cos(2 \gamma) > \frac{1-b^2}{2}$,
\item $2b \cos (\gamma+60^{\circ}) \cos \left (\omega - 60^{\circ} \right ) + \cos(2 \gamma+120^{\circ}) > \frac{1-b^2}{2}$,
\item $2b \cos (\gamma+120^{\circ}) \cos \left (\omega + 240^{\circ} \right ) + \cos(2 \gamma+240^{\circ}) > \frac{1-b^2}{2}$,
\end{enumerate}
where $\gamma = \frac{\alpha+\beta}{2}$ and $\omega = \frac{\alpha - \beta}{2}$. Then, using $\cos(2 \gamma) = 2\cos^2 \gamma - 1$, we can restate this further as

\begin{enumerate}
\item $4x^2 + 4bwx + b^2 > 3$,
\item $4x^2 + 4btx + b^2 > 3$,
\item $4z^2 - 4bwz + b^2 > 3$,
\item $4y^2 + 4bty + b^2 > 3$,
\end{enumerate}
where $x = \cos \gamma$, $y = \cos(\gamma + 120^{\circ})$, $z=\cos (\gamma + 60^{\circ})$, $t=\cos (\omega+240^{\circ})$, $w = \cos (\omega + 120^{\circ})$. Clearly, after adding $\pi$ to all the angles, the variables will change their signs, but the inequalities will remain unchanged. Therefore, without loss of generality we may assume that $x \geq 0$. We note that $t$ and $w$ can not be both in $\left ( -\frac{1}{2}, \frac{1}{2} \right )$, as there are cosines of angles differing by $120^{\circ}$. Let us first suppose that $t \not \in \left ( -\frac{1}{2}, \frac{1}{2} \right )$. If $y < 0$, then by the inequalities $2$ and $4$ we get an immediate contradiction with Lemma \ref{lem:trig2} for $r:=t$. Thus, let us suppose that $y \geq 0$. If $t \leq - \frac{1}{2}$, then by the inequalities $2$ and $4$ we have
$$x > \frac{-b+\sqrt{3(4-b^2)}}{4} \quad \text{ and } \quad y > \frac{-b+\sqrt{3(4-b^2)}}{4}.$$
However, it is easy to check that for $b \in [0, 1]$ we have
$$\frac{-b+\sqrt{3(4-b^2)}}{4} \geq \frac{1}{2}.$$
and hence $x, y > \frac{1}{2}$, but this is clearly impossible, as $x$ and $y$ are cosines of angles differing by $120^{\circ}$.

Now, suppose that $t > \frac{1}{2}.$ Then $w < \frac{1}{2}$. Moreover, because $x, y \geq 0$ we must have also $z \geq 0$ (as $z=\cos (\gamma + 60^{\circ})$). If $w \leq -\frac{1}{2}$, then we obtain a contradiction with Lemma \ref{lem:trig2} for $r:=w$ combined with the inequalities $1$ and $3$, similarly as in the previous case, but applied to $x$ and $-z=\cos(\gamma - 120^{\circ}) \leq 0$. Thus, let us suppose that $w > -\frac{1}{2}$. Then, analogously as before, we conclude that
$$x > \frac{-b+\sqrt{3(4-b^2)}}{4} \geq \frac{1}{2}$$
and
$$-z > \frac{-b+\sqrt{3(4-b^2)}}{4} \geq \frac{1}{2},$$
which is a contradiction, as $x$ and $-z$ are cosines of angles differing by $120^{\circ}$. This concludes the proof in the considered case.

We are left with considering a possibility of the covering of the same subset of the grid, but for the toric bodies of type $1$, i.e. $(b, e^{i \alpha}, e^{i \beta})$. In this case, assuming that the considered subset is covered, a similar calculation as before yields that the following conditions
$$b \left ( \cos \left ( \alpha + \frac{2k\pi}{3} \right ) + \cos \beta  \right ) + \cos \left( \alpha + \beta+ \frac{2k\pi}{3} \right ) > \frac{1-b^2}{2},  $$
and
$$b \left ( \cos \alpha + \cos \left ( \beta + \frac{2k\pi}{3} \right ) \right )  + \cos \left( \alpha + \beta+ \frac{2k\pi}{3} \right ) > \frac{1-b^2}{2},  $$
need to hold for $k \in \{1, 2\}$. We rewrite them as

\begin{enumerate}
\item $2b \cos \gamma \cos \omega  + \cos(2 \gamma) > \frac{1-b^2}{2}$,
\item $2b \cos \left ( \gamma - 120^{\circ} \right ) \cos (\omega - 120^{\circ}) + \cos(2\gamma - 240^{\circ}) > \frac{1-b^2}{2}$,
\item $2b \cos \gamma \cos \left  (\omega - 120^{\circ} \right ) + \cos(2 \gamma) > \frac{1-b^2}{2}$,
\item $2b \cos (\gamma-120^{\circ}) \cos \omega + \cos(2 \gamma-240^{\circ}) > \frac{1-b^2}{2}$,
\end{enumerate}
where $\gamma = \frac{\alpha+\beta}{2} + 60^{\circ}$ and $\omega = \frac{\alpha - \beta}{2} + 60^{\circ}$. Consequently, by $\cos(2 \gamma) = 2\cos^2 \gamma - 1$ this leads to the system of inequalities
\begin{enumerate}
\item $4x^2 + 4btx + b^2 > 3$,
\item $4x^2 + 4bwx + b^2 > 3$,
\item $4y^2 + 4bty + b^2 > 3,$
\item $4y^2 + 4bwy + b^2 > 3$,
\end{enumerate}
where $x = \cos \gamma$, $y = \cos(\gamma - 120^{\circ})$, $t=\cos \omega$, $w=\cos (\omega-120^{\circ})$. As in the previous cases, we can assume that $x \geq 0$. For any angle $\omega$ we have
$$\min\{t, w\} \leq \frac{1}{2} \quad \text{ and } \quad \max\{t, w\} \geq -\frac{1}{2}.$$
The same holds for $x$ and $y$:
$$\min\{x, y\} \leq \frac{1}{2} \quad \text{ and } \quad \max\{x, y\} \geq -\frac{1}{2}.$$
In particular, taking into account inequalities $1$ and $2$ we see that $x$ has to satisfy the inequality
\begin{equation}
\label{ineqx2}
x > \frac{-b+\sqrt{3(4-b^2)}}{4},
\end{equation}
as $x \geq 0$. Now, we shall consider two cases: $y \geq 0$ or $y<0$.

If $y \geq 0$, then $y$ also satisfies the inequality
$$y > \frac{-b+\sqrt{3(4-b^2)}}{4}.$$
However, as already noted before, for $b \in [0, 1]$ we have
$$\frac{-b+\sqrt{3(4-b^2)}}{4} \geq \frac{1}{2}.$$
It follows that $x, y>\frac{1}{2}$, but this is impossible for the cosines of angles differing by $120^{\circ}$.

Let us now suppose that $y<0$. Since $t, w$ are cosines of angles differing by $120^{\circ}$, they can not be both in the interval $\left [ - \frac{1}{2}, \frac{1}{2} \right ]$. Let us therefore suppose that $w \in [-1, 1] \setminus \left ( -\frac{1}{2}, \frac{1}{2} \right )$. In this case, by inequalities $2$ and $4$, we get a contradiction with Lemma \ref{lem:trig2} for $r:=w$. Similarly, if $t \not \in \left [-\frac{1}{2}, \frac{1}{2} \right ]$, then by inequalities $1$ and $3$ we get a contradiction with Lemma \ref{lem:trig2} for $r:=t$. This concludes the proof.

\end{proof}

We believe that establishing Conjecture \eqref{conj1} for $n \geq 4$ will be highly non-trivial, even in its weaker version, where the vectors $a_1, \ldots, a_n \in \mathbb{C}^n$ are assumed to be unimodular. Under this assumption, the corresponding higher-dimensional toric bodies centered in vectors $a_i$ also differ only by translation. However, for example in dimension $n=4$, there exists a one-parameter infinite family of pairwise non-equivalent complex Hadamard matrices, each achieving equality in \eqref{conj1}. Therefore, since the bound $\sqrt{n}$ in the conjecture is sharp, any proof for $n=4$ must necessarily account for this infinite family. We do not know whether, as in the case $n=3$, there also exist non-unimodular vectors achieving equality. More broadly, we suspect that Conjecture~\ref{conj1} is deeply intertwined with the geometry of $\mathbb{C}^n$, and in particular, with the classification of $n \times n$ complex Hadamard matrices.

\end{document}